\newtheorem{theorem}{Theorem}
\newtheorem{lemma}[theorem]{Lemma}
\newtheorem{proposition}[theorem]{Proposition}
\theoremstyle{definition}
\theoremstyle{remark}
\numberwithin{equation}{section}
\DeclareMathAlphabet{\matheur}{U}{eur}{m}{n}
\newcommand{\m}{\mathrm{m}}
\newcommand{\C}{\mathbb{C}}
\newcommand{\R}{\mathbb{R}}
\newcommand{\Q}{\mathbb{Q}}
\newcommand{\Z}{\mathbb{Z}}
\newcommand{\re}{\mathop{\mathrm{Re}}} % Real part
\newcommand{\im}{\mathop{\mathrm{Im}}} % Imaginary part
\newcommand{\sgn}{\,\mathrm{sgn}}
\newcommand{\al}{\alpha}
\renewcommand\d{{\mathrm d}}
\mathchardef\pFcomma=\mathcode`, % keep a copy of the comma
\newcommand*\pFq[5]{%
  \begingroup
  \begingroup\lccode`~=`,
    \lowercase{\endgroup\def~}{\pFcomma\mkern\pFqskip}%
  \mathcode`,=\string"8000
  {}_{#1}F_{#2}\biggl(\genfrac..{0pt}{}{#3}{#4} \,\,\bigg| \,\, #5\biggr)%
  \endgroup
}
\renewcommand{\d}{\mathrm d}
\begin{document}
%~
\title[Mahler measure of a nonreciprocal family of elliptic curves]{Mahler measure of a nonreciprocal family of elliptic curves}

\author{Detchat Samart}
\address{Department of Mathematics, Faculty of Science, Burapha University, Chonburi, Thailand 20131} \email{petesamart@gmail.com}

%    \thanks will become a 1st page footnote.
%\thanks{}

%    General info
%\subjclass[2010]{11R06, 11F67, 33C20}

\date{\today}

\maketitle

\begin{abstract}
In this article, we study the logarithmic Mahler measure of the one-parameter family \[Q_\al=y^2+(x^2-\al x)y+x,\] denoted by $\m(Q_\al)$. The zero loci of $Q_\al$ generically define elliptic curves $E_\al$ which are $3$-isogenous to the family of Hessian elliptic curves. We are particularly interested in the case $\al\in (-1,3)$, which has not been considered in the literature due to certain subtleties. For $\alpha$ in this interval, we establish a hypergeometric formula for the (modified) Mahler measure of $Q_\al$, denoted by $\tilde{n}(\alpha).$ This formula coincides, up to a constant factor, with the known formula for $\m(Q_\al)$ with $|\al|$ sufficiently large. In addition, we verify numerically that if $\alpha^3$ is an integer, then $\tilde{n}(\alpha)$ is a rational multiple of $L'(E_\alpha,0)$. A proof of this identity for $\alpha=2$, which is corresponding to an elliptic curve of conductor $19$, is given.
\end{abstract}

\section{Introduction}\label{S:intro}
For any Laurent polynomial $P\in \C[x_1^{\pm1},\ldots,x_n^{\pm1}]\backslash\{0\}$, the (logarithmic) Mahler measure of $P$, denoted by $\m(P)$, is the average of $\log |P|$ over the $n$-torus. In other words, 
\begin{align*}
\m(P)&= \frac{1}{(2\pi i)^n}\idotsint\limits_{|x_1|=\dots=|x_n|=1}\log |P(x_1,\dots,x_n)|\frac{\d x_1}{x_1}\dotsb\frac{\d x_n}{x_n}.
\end{align*}

Consider the following two families of bivariate polynomials \begin{align*}
P_\al(x,y)&=x^3+y^3+1-\al xy,\\
Q_\al(x,y)&=y^2+(x^2-\al x)y+x,
\end{align*} with the parameter $\al\in \C$. For $\al\ne 3,$ the zero loci of $P_\al$ define a family of elliptic curves known as the {\it Hessian curves}. There is a $3$-isogeny between $P_\al(x,y)=0$ and the curve
\[E_\al:Q_\al(x,y)=0,\]
which is isomorphic to the curve in the Deuring form, defined by the zero locus of
\[R_\al(x,y)= y^2+\al xy+y-x^3.\]
Observe that 
\[(x^2y)^3P_\al\left(\frac{y}{x^2},\frac{1}{xy}\right)=Q_\al(x^3,y^3),\]
from which we have $\m(P_\al)=\m(Q_\al)$ (see \cite[Cor.~8]{Schinzel}). Similarly, the change of variables $(x,y)\mapsto (-y,xy)$ transforms the family $R_\al$ into $Q_\al$ without changing the Mahler measure. For some technical reasons which shall be addressed below, we will focus on the family $Q_\al$ only. Following notation in previous papers \cite{LR,Rogers,RZ}, we let
\[n(\al):= \m(Q_\al).\]
The Mahler measure of $Q_\alpha$ (and its allies) was first studied by Boyd in his seminal paper \cite{Boyd}. He verified numerically that for several $\al\in \Z$ with $\al \notin (-1,3),$
 \begin{equation}\label{E:Hesse}
 n(\al)\stackrel{?}=r_\al L'(E_\al,0),
 \end{equation}
where $r_\al\in \Q$ and $A\stackrel{?}=B$ means $A$ and $B$ are equal to at least $50$ decimal places. Later, Rodriguez Villegas \cite{RV} made an observation that \eqref{E:Hesse} seems to hold for all sufficiently large $|\al|$ which is a cube root of an integer. The values of $\al$ for which \eqref{E:Hesse} has been proven rigorously are given in Table~\ref{Ta:Pk}.

\begin{table}[ht]\label{Ta:Pk}
\centering \def\arraystretch{1.1}
    \begin{tabular}{ | c | c | c | c |}
    \hline
    $\al$ & Conductor of $E_\al$ & $r_\al$ & Reference(s)\\ \hline
    $-6$ & $27$ & $3$ & \cite{RV} \\ \hline
    $-3$ & $54$ & $1$ & \cite{Brunault} \\ \hline
    $-2$ & $35$ & $1$ & \cite{Brunault}  \\ \hline
    $-1$ & $14$ & $2$ & \cite{Mellit2},\cite{Brunault}  \\ \hline
    $\sqrt[3]{32}$ & $20$ & $\frac83$& \cite{RZ}  \\ \hline
    $\sqrt[3]{54}$ & $36$ & $\frac32$ & \cite{Rogers}\\ \hline
   $5$ & $14$ & $7$ & \cite{Mellit2}\\ \hline
    \end{tabular}
\caption{Proven formulas for \eqref{E:Hesse}}
\label{T2}
\end{table}
In addition to the results in this list, there are some known identities which relate $n(\alpha)$, where $\alpha$ is a cube root of an \textit{algebraic integer}, to a linear combination of $L$-values. For example, the author proved in \cite{SamartCJM} that the following identity is true:
\begin{equation}\label{E:Sa}
n\left(\sqrt[3]{6-6\sqrt[3]{2}+18\sqrt[3]{4}}\right)=\frac12\left(L'(F_{108},0)+L'(F_{36},0)-3L'(F_{27},0)\right),
\end{equation}
where $F_N$ is an elliptic curve over $\Q$ of conductor $N$. In compliance with Boyd's results, it is worth noting that \[\sqrt[3]{6-6\sqrt[3]{2}+18\sqrt[3]{4}}\approx 3.0005>3.\] We refer the interested reader to the aforementioned paper for more conjectural identities of this type.

Recall that a polynomial $P(x_1,x_2,\ldots,x_n)$ is said to be \textit{reciprocal} if there exist integers $d_1,d_2,\ldots,d_n$ such that 
\[x_1^{d_1}x_2^{d_2}\cdots x_n^{d_n}P(1/x_1,1/x_2,\ldots,1/x_n)=P(x_1,x_2,\ldots,x_n),\]
and \textit{nonreciprocal} otherwise. For a family of two-variable polynomials 
\begin{equation}\label{E:Pt}
\tilde{P}_\alpha(x,y)=A(x)y^2+(B(x)+\alpha x)y+C(x),
\end{equation} let $Z_\alpha$ be the zero locus of $\tilde{P}_\alpha(x,y)$ and let $K$ be the set of $\alpha\in \C$ for which $\tilde{P}_\alpha$ vanishes on the $2$-torus. Boyd conjectured from his experiments that, for all integer $\alpha$ in the unbounded component $G_\infty$ of $\C\backslash K$, if $\tilde{P}_\alpha$ is \textit{tempered} (see \cite{RV} for the definition), then $\m(\tilde{P}_\alpha)$ is related to an $L$-value of elliptic curve (if $Z_\alpha$ has genus one) or Dirichlet character (if $Z_\alpha$ has genus zero). If $\tilde{P}_\alpha(x,y)$ is reciprocal, then it can be shown that $K\subseteq \R$, implying $\overline{G}_\infty=\C$. Hence by continuity one could expect that identities like \eqref{E:Hesse} hold for all $\alpha\in \Z$, with some exceptions in the genus zero cases. Examples of polynomials satisfying these properties include the families $x+1/x+y+1/y+\alpha$ and $(1+x)(1+y)(x+y)-\alpha xy$, whose Mahler measures have been extensively studied over the past few decades (e.g. see \cite{Boyd,LSZ,LR,MS,Mellit2,Rogers,RZ,RV}).

The family $Q_\al$, on the other hand, is nonreciprocal, so the set $K$ of $\al\in \C$ for which $Q_\al$ vanishes on the $2$-torus has nonempty interior. In fact, as described in \cite[\S 2B]{Boyd} and \cite[\S 14]{RV}, $K$ is the region inside a hypocycloid whose vertices are the cube roots of $27$ in the complex plane and $K\cap \R=(-1,3)$. This is illustrated in Figure~\ref{F:hypocycloid} below. 
\begin{figure}[h!]
\centering
  \includegraphics[width=2.2 in]{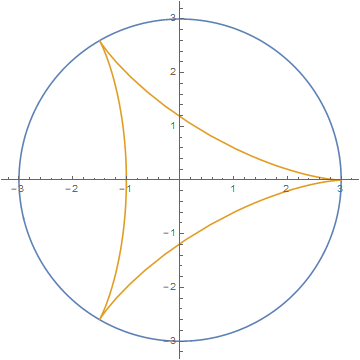}
  \caption{}
  \label{F:hypocycloid}
\end{figure} 
It is known (see, for example, \cite[Thm.~3.1]{Rogers}) that, for most complex numbers $\al$, $n(\al)$ is expressible in terms of a generalized hypergeometric function: if $|\al|$ is sufficiently large, then
\begin{equation}\label{E:naR}
n(\al)=\re\left(\log \al -\frac{2}{\al^3}\pFq{4}{3}{\frac43,\frac53,1,1}{2,2,2}{\frac{27}{\al^3}}\right).
\end{equation}
Since both sides of \eqref{E:naR} are real parts of holomorphic functions that agree at every point in an open subset of the region $\C\backslash K$, the formula \eqref{E:naR} is valid for all $\al\in \C\backslash K;$ i.e., for all $\al$ on the border and outside of the hypocycloid in Figure~\ref{F:hypocycloid}. Because of this anomalous property of the family  $Q_\al$ (and other nonreciprocal families in general), to our knowledge, there are no known results about $n(\al)$ for $\al\in K$, with an exception for the case $\alpha=0$ due to Smyth \cite{Smyth2}, namely
\[n(0)=\m(x^3+y^3+1)=\m(x+y+1)=L'(\chi_{-3},-1),\]
where $\chi_{-N}=\left(\frac{N}{\cdot}\right).$ The aim of this paper is to give a thorough investigation of these omitted values of $n(\al)$. In particular, we are interested in establishing formulas analogous to \eqref{E:Hesse} and \eqref{E:naR} for $\al \in (-1,3)$.

While the family $P_\alpha$ is more well established than the family $Q_\alpha$ in the literature, we choose to work with the latter for the following two reasons. Firstly, the family $Q_\alpha$ is in the form \eqref{E:Pt}, whose Mahler measure can be efficiently computed from both theoretical and numerical perspectives, regardless of the value of $\alpha$. Therefore, one can test the results numerically with high precision computations. The Mahler measure of $P_\alpha$, on the other hand, is quite difficult to compute, especially when $\alpha\in K$. Secondly, although the zero loci of $P_\alpha$ and $Q_\alpha$ give elliptic curves in the same isogeny class, their certain arithmetic properties, which are involved in the process of evaluating their Mahler measure in terms of $L'(E_\alpha,0)$, could be different. This will be elaborated at the end of this section.

Let us first factorize $Q_\al$ as 
\[Q_\al(x,y)=y^2+(x^2-\al x)y+x= (y-y_+(x))(y-y_-(x)),\]
where 
\[y_{\pm}(x)=-(x^2-\al x)\left(\frac12\pm \sqrt{\frac14-\frac{1}{x(x-\al)^2}}\right),\]
and denote
\[
J(\al)= \frac{1}{\pi}\int_{\cos^{-1}\left(\frac{\al-1}{2}\right)}^\pi\log |y_+(e^{i\theta})|\d\theta.
\]
(Here and throughout we use the principal branch for the complex square root.) The significance of the function $J(\al)$, which can be seen as a part of $\m(Q_\alpha)$, will be made clear later. For $\al\in (-1,1)\cup (1,3)$, $y_{\pm}(x)$ are functions on $\mathbb{T}^1:=\{x\in \C \mid |x|=1\}$. If $\al=1$, $y_{\pm}(x)$ have only one removable singularity on $\mathbb{T}^1$, namely $x=1$, so we can extend its domain to $\mathbb{T}^1$ by setting 
\[y_{\pm}(1)= \lim_{x\rightarrow 1}y_{\pm}(x)=\mp i.\]
The first main result of this paper is the following hypergeometric formula, which extends \eqref{E:naR}.

\begin{theorem}\label{T:hyper}
Let $\tilde{n}(\al)=n(\al)-3J(\al)$. For $\al\in (-1,3)\backslash\{0\}$, the following identity is true:
\[\tilde{n}(\al)=
\frac{4}{1-3\sgn(\al)}\re\left(\log \al -\frac{2}{\al^3}\pFq{4}{3}{\frac43,\frac53,1,1}{2,2,2}{\frac{27}{\al^3}}\right).
\]
\end{theorem}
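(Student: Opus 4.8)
The plan is to reduce $n(\al)$ to a one–dimensional integral by Jensen's formula, isolate the piece that is genuinely new for $\al\in(-1,3)$, and then match everything against the analytic continuation of the right–hand side of \eqref{E:naR}. First I would integrate $\log|Q_\al(x,y)|$ over $|y|=1$ for fixed $x\in\mathbb{T}^1$. Since $Q_\al$ is monic of degree two in $y$ with roots $y_\pm(x)$, Jensen's formula gives the inner integral $\log^+|y_+(x)|+\log^+|y_-(x)|$, while Vieta forces $y_+(x)y_-(x)=x$, so $|y_+(x)||y_-(x)|=1$ on $\mathbb{T}^1$. Because the principal square root has non-negative real part, $|y_+|^2-|y_-|^2=|x-\al|^2\cdot 2\,\Re\sqrt{\tfrac14-\tfrac1{x(x-\al)^2}}\ge0$, whence $|y_+|\ge1\ge|y_-|$ throughout and the inner integral equals $\log|y_+(x)|$. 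Folding $[0,2\pi]$ onto $[0,\pi]$ by the reality of the coefficients yields
\[
n(\al)=\frac1\pi\int_0^\pi\log|y_+(e^{i\theta})|\,\d\theta .
\]

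Next I would locate the transition angle. Solving $Q_\al(e^{i\theta},e^{i\phi})=0$ on the $2$-torus forces $\cos\theta=\tfrac{\al-1}{2}$, i.e.\ $\theta=\theta_0:=\cos^{-1}\!\big(\tfrac{\al-1}{2}\big)$, the lower limit in $J(\al)$; equivalently this is the unique point of $(0,\pi)$ where $\tfrac14-\tfrac1{x(x-\al)^2}$ crosses the negative real axis, so the principal branch of $\sqrt{\,\cdot\,}$ jumps and $y_+,y_-$ interchange. Splitting the integral at $\theta_0$ writes $n(\al)$ as the integral over $(0,\theta_0)$ plus $J(\al)=\frac1\pi\int_{\theta_0}^\pi\log|y_+|\,\d\theta$, so that $\tilde n(\al)=n(\al)-3J(\al)$ becomes an explicit signed combination of these two pieces, which is the object I must evaluate in closed form.

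The heart of the argument is to identify this combination with $g(\al):=\Re\big(\log\al-\tfrac{2}{\al^3}\pFq{4}{3}{\frac43,\frac53,1,1}{2,2,2}{\frac{27}{\al^3}}\big)$. Here I would differentiate in $\al$: implicit differentiation of $Q_\al(x,y_+)=0$ gives $\partial_\al\log y_+=1/\big(2(\al-x)\sqrt D\big)$ with $D=\tfrac14-\tfrac1{x(x-\al)^2}$, so that $n'(\al)$ is the real part of a period
\[
\frac1{2\pi i}\oint_{|x|=1}\frac{\d x}{\sqrt{x\big(x(x-\al)^2-4\big)}}
\]
of the elliptic curve $t^2=x\big(x(x-\al)^2-4\big)$. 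For large $|\al|$ this period is evaluated by residues and a series expansion in $27/\al^3$ and integrates back to \eqref{E:naR}. As $\al$ descends through $\al=3$ (where $27/\al^3=1$, the branch point of the ${}_4F_3$) a branch point of the quartic crosses the contour, and the resulting change of homology class contributes a multiple of $J'(\al)$; tracking this monodromy, together with the cube-root monodromy at $\al=0$ (the point $z=\infty$, where the local exponents $\tfrac43,\tfrac53$ force a threefold behaviour and the determination of $\log\al$ switches), is what should produce both the factor $3$ in front of $J$ and the $\sgn(\al)$-dependent normalisation $\tfrac{4}{1-3\sgn(\al)}$.

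The main obstacle is precisely this branch-and-monodromy bookkeeping. One must (i) show that the naive analytic continuation of \eqref{E:naR} computes $\frac1\pi\int_0^\pi\log|\tilde y_+|\,\d\theta$ for the \emph{analytically continued} root $\tilde y_+$ rather than the true $n(\al)$, the discrepancy being measured by $J(\al)$; (ii) compute the discontinuity of the ${}_4F_3$ across $[1,\infty)$ (its excess parameter $\sum b_j-\sum a_j=6-5=1$ being an integer, the singularity at $z=1$ is logarithmic) and reconcile it with the root interchange at $\theta_0$ to pin down the constants; and (iii) handle the genuinely different analytic continuation for $\al>0$ (where $z=27/\al^3>1$ lies on the hypergeometric cut) versus $\al<0$ (where $z<0$ but $\log\al$ acquires $i\pi$), which is the source of the two cases in the coefficient. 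Carrying out (ii)--(iii) carefully, and fixing the single remaining additive constant by letting $\al\to3^-$ (so that $\theta_0\to0$ and $J\to0$, forcing agreement with the known value outside $K$), should complete the proof.
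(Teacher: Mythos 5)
Your opening is sound and matches the paper's own reduction: Jensen's formula together with the principal-branch inequality $|y_-(x)|\le 1\le |y_+(x)|$ gives $n(\al)=\frac1\pi\int_0^\pi\log|y_+(e^{i\theta})|\,\d\theta$, and splitting at $\theta_0=\cos^{-1}\big(\frac{\al-1}{2}\big)$ is the right decomposition (note, though, that the curve also meets the torus at $\theta=0$, where $y_+$ jumps between $Y_+$ and $Y_-$; any honest monodromy analysis must account for this point as well, not only for $\pm\theta_0$). The gap is that the entire content of the theorem --- that $\frac{\d}{\d\al}\big(n(\al)-3J(\al)\big)$ equals $1$ (for $\al<0$) or $-2$ (for $\al>0$) times the analytically continued derivative $\re\big(\frac1\al\pFq{2}{1}{\frac13,\frac23}{1}{\frac{27}{\al^3}}\big)$ of the right-hand side --- is deferred to ``branch-and-monodromy bookkeeping'' which you say \emph{should} produce the factor $3$ and the $\sgn(\al)$-dependent normalisation, but which you never carry out. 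This is precisely where all the work lies. In the paper this step occupies three lemmas: the derivative of $\tilde n$ is reduced, with $\al=(\lambda^3-2)/\lambda$, to a single real period $-\frac1\pi\int_0^{\lambda^2}\d x/\sqrt{p_\lambda(x)}$ (Lemma~\ref{L:dl}); that reduction needs an exact path identity (Lemma~\ref{L:gdi}) proved by exhibiting an explicit algebraic correspondence $F_\lambda(x,y)=0$, found by \texttt{PSLQ}, together with a M\"obius transformation; and the period is then evaluated via Ramanujan's cubic transformation of $\pFq{2}{1}{\frac13,\frac23}{1}{\cdot}$ (Lemma~\ref{L:compd}), which is where the asymmetric constants $1$ and $-2$ actually emerge. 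Asserting that the discontinuity of the ${}_4F_3$ across its cut, reconciled with the root interchange at $\theta_0$, will ``pin down the constants'' restates the claim rather than proving it; nothing in your plan substitutes for these computations.

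There is also a concrete error in your constant-fixing step. As $\al\to 3^-$ one has $\theta_0\to 0$, so the integration range of $J$ grows to all of $[0,\pi]$ and $J(\al)\to n(3)$; it is as $\al\to -1^+$ (where $\theta_0\to\pi$) that $J(\al)\to 0$. Hence $\tilde n(\al)\to n(3)-3n(3)=-2n(3)$ at the right endpoint, which is exactly what makes the additive constant vanish against the factor $-2$ there; with your claimed limit $J\to 0$ you would deduce $\tilde n(3^-)=n(3)$ and be forced to insert a nonzero constant $3n(3)$, contradicting the stated formula. Moreover, since $\al=0$ separates the domain, an additive constant must be fixed on each of $(-1,0)$ and $(0,3)$ separately, so both endpoint limits $\al\to-1^+$ and $\al\to3^-$ are needed (as in the paper), not ``the single remaining additive constant'' at one endpoint.
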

\noindent By Theorem~\ref{T:hyper} and a result of Rogers \cite[Eq.~(43)]{Rogers}, we can express $\tilde{n}(\al)$ in terms of (convergent) $_3F_2$-hypergeometric series; for $\al\in (-1,3)\backslash\{0\}$, 
\[\tilde{n}(\al)=s(\alpha)\left(\frac{\sqrt[3]{2}\Gamma\left(\frac16\right)\Gamma\left(\frac13\right)\Gamma\left(\frac12\right)}{\sqrt{3}\pi^2}\al\pFq{3}{2}{\frac13,\frac13,\frac13}{\frac23,\frac43}{\frac{\al^3}{27}}+\frac{\Gamma^3\left(\frac{2}{3}\right)}{2\pi^2}\al^2\pFq{3}{2}{\frac23,\frac23,\frac23}{\frac43,\frac53}{\frac{\al^3}{27}}\right),\]
where $s(\alpha)=-\frac{(1+3\sgn(\alpha))^2}{64}.$

We also study $\tilde{n}(\al)$ from the arithmetic point of view. We discovered from our numerical computation that when $\al\in (-1,3)$ is a cube root of an integer, then $\tilde{n}(\al)$ (conjecturally) satisfies an identity analogous to \eqref{E:Hesse}. Numerical data for this identity are given in Table~\ref{T:2}. This identity can be proven rigorously in some cases using Brunault-Mellit-Zudilin's formula (see Theorem~\ref{T:BMZ} below). As a concrete example, we prove the following result.
\begin{theorem}\label{T:main2}
Let $\tilde{n}(\al)=n(\al)-3J(\al)$ and let $E_\al$ be the elliptic curve defined by the zero locus of $Q_\al$. Then the following evaluation is true:
\begin{align}
\tilde{n}(2)&= -3 L'(E_2,0).\label{E:19}
\end{align}
\end{theorem}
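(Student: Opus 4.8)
The plan is to express $\tilde n(2)$ as the value of the Beilinson regulator on an explicit symbol in $K_2(E_2)$, and then to evaluate that regulator in terms of $L(E_2,2)$ via the modular parametrization of $E_2$ (which has conductor $19$) together with the Brunault--Mellit--Zudilin formula (Theorem~\ref{T:BMZ}); the functional equation then converts $L(E_2,2)$ into $L'(E_2,0)$.

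First I would turn $\tilde n(2)=n(2)-3J(2)$ into a single regulator integral. Applying Jensen's formula in the variable $y$ to the factorization $Q_2=(y-y_+(x))(y-y_-(x))$ writes $n(2)$ as an integral of $\log^+|y_\pm(e^{i\theta})|$ over $\TT^1$, which splits at the branch point $\theta_0=\cos^{-1}\!\big(\tfrac{\al-1}{2}\big)$ precisely because $\al=2$ lies inside the hypocycloid region $K$. The subtraction of $3J(2)$ is designed exactly so that the leftover integrand is the pullback of the regulator one-form $\eta(x,y)=\log|x|\,\d\arg y-\log|y|\,\d\arg x$ along a \emph{closed} cycle $\gamma$ on $E_2(\C)$; that is, $\tilde n(2)=\tfrac{1}{2\pi}\int_\gamma\eta(x,y)$ up to an explicit rational constant coming from the $3$-isogeny relating $Q_\al$ and $P_\al$.

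Next I would check that the Milnor symbol $\{x,y\}$ defines a class in $K_2(E_2)\otimes\Q$: temperedness of $Q_2$ forces the tame symbols at the bad fibres to be torsion, so $\{x,y\}$ (after clearing denominators) is integral, and $\eta(x,y)$ is its image under the regulator map. Because $E_2$ is modular I would fix the modular parametrization $\phi\colon X_0(19)\to E_2$, pull back $\{x,y\}$, and rewrite $\phi^*\eta$ in terms of the weight-$2$ newform $f_{19}$ and a product of Eisenstein series / modular units on $X_0(19)$. Applying the Brunault--Mellit--Zudilin formula to the resulting integral of this Eisenstein symbol against $f_{19}$ then yields a rational multiple of $L(f_{19},2)=L(E_2,2)$. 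Finally, using $\Lambda(E_2,s)=w\,\Lambda(E_2,2-s)$ one gets $L'(E_2,0)=w\,\tfrac{19}{4\pi^2}L(E_2,2)$, and assembling all the rational factors gives the claimed coefficient $-3$.

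The main obstacle is twofold. First, one must show that the cycle $\gamma$ produced by the Mahler-measure integral is homologous, in $H_1(E_2,\ZZ)$, to an integral combination of the modular symbols for which the Brunault--Mellit--Zudilin formula computes $L(E_2,2)$, and that the transcendental regulator of $\{x,y\}$ lands in the Eisenstein part so that no stray period survives. Second, pinning down the exact constant $-3$ (rather than merely proving $\tilde n(2)\in\Q^\times L'(E_2,0)$) requires careful bookkeeping of the isogeny factor, the $\sgn(\al)$-dependent normalization reflected in Theorem~\ref{T:hyper}, the degree of $\phi$, and the functional-equation constant; the high-precision numerical value in Table~\ref{T:2} serves as the essential consistency check throughout.
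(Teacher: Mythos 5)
Your overall skeleton (closed regulator cycle, modular parametrization, Brunault--Mellit--Zudilin, functional equation) matches the paper's, but the middle of your argument has a genuine gap. Theorem~\ref{T:BMZ} computes $\int\eta(g_a,g_b)$ only for \emph{Siegel units} $g_a,g_b$ on $\Gamma_1(N)$; it says nothing about the regulator of a general class in $K_2(E_2)\otimes\Q$. Your plan to pull back $\{x,y\}$ along $\phi\colon X_0(19)\to E_2$ and ``rewrite $\phi^*\eta$ in terms of Eisenstein series / modular units'' cannot work on $X_0(19)$: that curve has only two cusps, so its group of modular units modulo constants has rank one, and any symbol built from such units is degenerate --- there are simply not enough units there. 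Even after passing to $X_1(19)$, the possibility of such a rewriting is not a routine bookkeeping step: it is equivalent to the special fact, due to Brunault, that the curve $19a3$ (unlike its isogenous curve $19a1$) admits a parametrization by \emph{modular units}, a property enjoyed by only finitely many elliptic curves over $\Q$. The paper's proof hinges exactly on this: it exhibits $x(\tau)=-g_1g_7g_8/(g_2g_3g_5)$ and $y(\tau)=g_1g_7g_8/(g_4g_6g_9)$ as explicit products of Siegel units on $\Gamma_1(19)$, verifies via Yang's criterion and Sturm's bound that $(x(\tau),y(\tau))$ parametrizes $E_2$, identifies $\tilde{\gamma}_2=\varphi_*\left\{\frac{4}{19},-\frac{4}{19}\right\}$ by a period computation, and only then applies Theorem~\ref{T:BMZ} directly to $\eta(x(\tau),y(\tau))$, obtaining $-\frac{1}{4\pi^2}L(57f_2,2)=-3L'(f_2,0)$. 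Your closing remark that one must show ``the regulator of $\{x,y\}$ lands in the Eisenstein part so that no stray period survives'' names precisely the step that your proposal leaves unproved --- and in general this step is Beilinson-conjecture territory; here it is discharged only by the explicit unit parametrization, which you never produce.

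Two smaller corrections. First, there is no ``isogeny factor'': Lemma~\ref{L:path} produces the closed cycle $\tilde{\gamma}_\alpha$ directly on $E_\alpha\colon Q_\alpha(x,y)=0$, with $\tilde{n}(2)=-\frac{1}{2\pi}\int_{\tilde{\gamma}_2}\eta(x,y)$ on the nose; indeed the $3$-isogeny is the reason one must \emph{not} transfer the problem to $P_2=0$, since that curve ($19a1$) has no modular-unit parametrization. Second, the temperedness/tame-symbol discussion certifying a class in $K_2$ is unnecessary for this argument: the proof integrates $\eta(x(\tau),y(\tau))$ over an explicit path between cusps and never needs the integrality of the symbol.
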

Note that $E_2$ has conductor $19$. What makes this curve special is that it admits {\it a modular unit parametrization}. The celebrated modularity theorem asserts that every elliptic curve over $\Q$ can be parametrized by modular functions. However, a recent result of Brunault \cite{Brunault2} reveals that there are only a finite number of them which can be parametrized by modular units (i.e. modular functions whose zeros and poles are supported at the cusps). In order to apply Brunault-Mellit-Zudilin's formula, one needs to show that the integration path corresponding to $\tilde{n}(2)$ becomes a closed path for the regulator integral defined on the curve $Q_2(x,y)=0.$ This path can then be translated into a path joining cusps on the modular curve $X_1(19)$. The calculation for this part will be worked out in Section~\ref{S:Lvalue}. On the other hand, the isogenous curve $P_2(x,y)=0$, which has Cremona label $19a1$, does not admit such a nice parametrization \cite[Tab.~1]{Brunault2}, so we cannot use the same argument to directly relate $\m(P_2)$ to $L'(E_2,0).$

\section{The hypergeometric formula}\label{S:hyper}
The goal of this section is to prove Theorem~\ref{T:hyper}. To achieve this goal, we need some auxiliary results as follows.
\begin{lemma}\label{L:ypm}
Let $\al\in \C$ and $x\in \C\backslash\{\al\}$. If $|x|=1$, then $|y_-(x)|\le 1 \le |y_+(x)|.$
\end{lemma}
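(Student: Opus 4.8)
The plan is to reduce the whole statement to two observations: a product formula for the two roots (which handles the fact that the claimed bounds are reciprocal to one another) and a single modulus comparison that collapses, thanks to the principal-branch convention fixed in the paper, to the sign of a real part.

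First I would record that, by the factorization $Q_\al(x,y)=(y-y_+(x))(y-y_-(x))$ stated just above the lemma, $y_+(x)$ and $y_-(x)$ are the two roots of the monic quadratic $y^2+(x^2-\al x)y+x$ in $y$, so their product is the constant term: $y_+(x)\,y_-(x)=x$. Since $|x|=1$, this gives at once $|y_+(x)|\,|y_-(x)|=1$. Consequently the two desired inequalities $|y_-(x)|\le 1$ and $1\le|y_+(x)|$ are equivalent, and both follow as soon as I show $|y_-(x)|\le|y_+(x)|$.

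Next, writing $b=x(x-\al)$ — which is nonzero, since $|x|=1$ forces $x\ne0$ and $x\ne\al$ by hypothesis — and $w=\sqrt{\tfrac14-\tfrac{1}{x(x-\al)^2}}$ for the principal square root, the definition of $y_\pm$ reads $y_\pm(x)=-b\bigl(\tfrac12\pm w\bigr)$. Hence $|y_-(x)|\le|y_+(x)|$ is equivalent to $\bigl|\tfrac12-w\bigr|\le\bigl|\tfrac12+w\bigr|$, and expanding the two squared moduli gives
\[
\Bigl|\tfrac12+w\Bigr|^2-\Bigl|\tfrac12-w\Bigr|^2=\bigl(\tfrac14+\Re(w)+|w|^2\bigr)-\bigl(\tfrac14-\Re(w)+|w|^2\bigr)=2\,\Re(w).
\]

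The crux — and really the only substantive point — is that $\Re(w)\ge0$. This is exactly the reason the paper fixes the principal branch of the square root: the principal value of $\sqrt{z}$ has argument in $(-\tfrac\pi2,\tfrac\pi2]$, hence nonnegative real part for every $z\in\C$. Thus $2\,\Re(w)\ge0$, which yields $|y_-(x)|\le|y_+(x)|$ and, combined with $|y_+(x)|\,|y_-(x)|=1$, the claim $|y_-(x)|\le1\le|y_+(x)|$. I do not anticipate a genuine obstacle; the only delicate point is the branch bookkeeping that would arise if one tried to rewrite $b\,w$ as $\sqrt{b^2(\tfrac14-w^2)}=\tfrac12\sqrt{(x^2-\al x)^2-4x}$, but since I work throughout with $y_\pm=-b\bigl(\tfrac12\pm w\bigr)$ and never extract that outer square root, this subtlety never intervenes.
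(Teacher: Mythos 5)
Your proof is correct and follows essentially the same route as the paper's: both arguments rest on the observation that the principal square root $w=\sqrt{\tfrac14-\tfrac{1}{x(x-\al)^2}}$ has $\Re(w)\ge 0$, which gives $\bigl|\tfrac12-w\bigr|\le\bigl|\tfrac12+w\bigr|$ and hence $|y_-(x)|\le|y_+(x)|$, and then combine this with $|y_+(x)||y_-(x)|=|x|=1$ (Vieta's formula for the product of the roots) to conclude. The only cosmetic difference is that you expand the squared moduli to isolate $2\Re(w)$, while the paper writes $w=a+bi$ with $a\ge 0$ and compares the moduli directly.
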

\begin{proof}
Assume that $|x|=1$ and write $\sqrt{\frac14-\frac{1}{x(x-\al)^2}}=a+bi$, where $a,b\in \R.$ Since the square root is defined using the principal branch, we have $a\ge 0$. Hence 
\[|y_-(x)|=|x^2-\al x|\left|\frac12-a-bi\right|\le |x^2-\al x|\left|\frac12+a+bi\right|=|y_+(x)|.\] Since $|y_+(x)||y_-(x)|=|x|=1$, it follows that $|y_-(x)|\le 1 \le |y_+(x)|$, as desired.
\end{proof}
By Lemma~\ref{L:ypm} and Jensen's formula, we have 
\begin{equation}\label{E:na}
\begin{aligned}
n(\al)&=\frac{1}{2\pi}\int_{-\pi}^\pi \log|y_+(e^{i\theta})|\d\theta\\
&= \frac{1}{\pi}\int_{0}^\pi \log|y_+(e^{i\theta})|\d\theta\\
&= \frac{1}{\pi}\re\int_0^\pi \log\left((x-\al)\left(\frac12+\sqrt{\frac14-\frac{1}{x(x-\al)^2}}\right)\right)\bigg\rvert_{x=e^{i\theta}}\d\theta,
\end{aligned}
\end{equation}
where the second equality follows from $y_+(e^{-i\theta})=\overline{y_+(e^{i\theta})}.$ Next, we shall locate the \textit{toric points}, the points of intersection of the affine curve $Q_\al=0$ and the $2$-torus, explicitly.
\begin{proposition}\label{P:intersect}
Let $\mathbb{T}^2=\{(x,y)\in \C^2 \mid |x|=|y|=1\}$ and for each $\al\in \C$ let $C_\al=\{(x,y)\in \C^2 \mid Q_\al(x,y)=0\}$. Then for $\al\in (-1,3)$, we have
\[C_\al\cap \mathbb{T}^2= \left\{\left(e^{it},y_{\pm}(e^{it})\right)\mid t=0,\pm\cos^{-1}\left(\frac{\al-1}{2}\right)\right\}.\]  
\end{proposition}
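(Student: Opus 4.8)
The plan is to find all $(x,y)\in\mathbb{T}^2$ with $Q_\al(x,y)=0$ by parametrizing $x=e^{it}$ and imposing the constraint $|y|=1$. Since $Q_\al=(y-y_+(x))(y-y_-(x))$ with $y_+(x)y_-(x)=x$, on the torus we have $|y_+(x)|\,|y_-(x)|=|x|=1$; combined with Lemma~\ref{L:ypm}, which gives $|y_-(x)|\le 1\le|y_+(x)|$, a toric point requires $|y_+(x)|=|y_-(x)|=1$. The key observation is that equality in Lemma~\ref{L:ypm} forces the quantity $a=\re\sqrt{\tfrac14-\tfrac{1}{x(x-\al)^2}}$ to vanish, since the proof there shows $|y_-|<|y_+|$ strictly whenever $a>0$. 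Thus the square root must be purely imaginary, equivalently $\tfrac14-\tfrac{1}{x(x-\al)^2}$ must be a nonpositive real number.

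The central computation, then, is to solve the condition $\tfrac14-\tfrac{1}{x(x-\al)^2}\in(-\infty,0]$ for $x=e^{it}$ with $\al\in(-1,3)$. I would write $x(x-\al)^2$ explicitly and demand that its reciprocal be real and $\ge\tfrac14$; the reality condition alone should already cut the circle down to finitely many candidate arguments $t$. Concretely, setting $x=e^{it}$, one computes $x(x-\al)^2$ and separates real and imaginary parts; the vanishing of the imaginary part of $\tfrac{1}{x(x-\al)^2}$ (equivalently of $\overline{x(x-\al)^2}$ after clearing the modulus) yields a trigonometric equation in $t$. I expect this to factor so that the solutions are exactly $t=0$ and $t=\pm\cos^{-1}\!\left(\tfrac{\al-1}{2}\right)$, the latter being well-defined precisely because $\al\in(-1,3)$ puts $\tfrac{\al-1}{2}\in(-1,1)$ inside the domain of $\arccos$. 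One must then check that at these arguments the real quantity genuinely satisfies $\tfrac{1}{x(x-\al)^2}\ge\tfrac14$ (so that the square root is imaginary, not a positive real), discarding any spurious roots of the reality equation.

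Having pinned down the admissible values of $t$, the final step is bookkeeping: at each such $x=e^{it}$ the square root is purely imaginary, so $y_\pm(x)=-(x^2-\al x)\bigl(\tfrac12\pm\sqrt{\cdots}\bigr)$ both have modulus $1$ by the equality case above, and hence \emph{both} $\left(e^{it},y_+(e^{it})\right)$ and $\left(e^{it},y_-(e^{it})\right)$ lie in $C_\al\cap\mathbb{T}^2$. This matches the claimed description, in which the $\pm$ ranges freely over each of the three arguments $t=0,\pm\cos^{-1}\!\left(\tfrac{\al-1}{2}\right)$.

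The main obstacle I anticipate is the trigonometric reduction in the middle step: extracting the imaginary part of $\tfrac{1}{x(x-\al)^2}$ and showing its zero set is exactly these three arguments requires careful algebra, and one must verify the inequality $\tfrac{1}{x(x-\al)^2}\ge\tfrac14$ to rule out points where the square root is real and the moduli split. A secondary subtlety is the potential coincidence or degeneration at $t=0$ (where $x=1$ and, if $\al=1$, one meets the removable singularity discussed before the theorem); I would confirm that $x=1$ does yield a genuine toric point for all $\al\in(-1,3)$ and that the three listed arguments remain distinct, so the intersection is genuinely as described rather than collapsing.
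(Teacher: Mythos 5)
Your proposal is correct and follows essentially the same route as the paper's proof: use the product relation $|y_+(x)||y_-(x)|=1$ together with the equality case of Lemma~\ref{L:ypm} to reduce toric points to the condition that $\sqrt{\tfrac14-\tfrac{1}{x(x-\al)^2}}$ be purely imaginary, equivalently that $x(x-\al)^2$ be real and lie in $(0,4)$, then solve the reality condition trigonometrically and discard sign-spurious roots. The paper carries out exactly the factorization you anticipate, namely $\im\bigl(x(x-\al)^2\bigr)=(\sin t)\bigl(2\cos t-(\al-1)\bigr)\bigl(2\cos t-(\al+1)\bigr)$, and then discards the extra roots $t=\pi$ and $\cos t=(\al+1)/2$ precisely as your plan prescribes (both make $x(x-\al)^2$ negative), treating the degenerate case $\al=1$ by continuity as you also flag.
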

\begin{proof}
Assume first that $\al\ne 1$. Suppose $|x|=1$, so $x=e^{it}$ for some $t\in (-\pi,\pi].$ Since $y_{\pm}(x)=-(x^2-\al x)\left(\frac12\pm \sqrt{\frac14-\frac{1}{x(x-\al)^2}}\right)$ and $|y_+(x)||y_-(x)|=|x|=1$, we have that the condition $|y_+(x)|=1=|y_-(x)|$ is equivalent to the equality
\begin{equation}\label{E:int1}
\left|\frac12+ \sqrt{\frac14-\frac{1}{x(x-\al)^2}}\right|=\left|\frac12- \sqrt{\frac14-\frac{1}{x(x-\al)^2}}\right|.
\end{equation}
It is easily seen that \eqref{E:int1} holds if and only if $\sqrt{\frac14-\frac{1}{x(x-\al)^2}}$ is purely imaginary; equivalently, $x(x-\al)^2\in (0,4)$. Simple calculation yields
\begin{align}
\re(x(x-\al)^2)&=(\cos t)((\cos t-\al)^2-\sin^2 t)-2(\cos t-\al)\sin^2 t, \label{E:int3}\\
\im(x(x-\al)^2)&=(\sin t)(2\cos t - (\al-1))(2\cos t -(\al+1)), \label{E:int4}\\
|x(x-\al)^2|&=|x-\al|^2=\al^2-2\al\cos t +1. \label{E:int2}
\end{align}
We have from \eqref{E:int4} that $x(x-\al)^2\in \R$ if and only if $\sin t=0$ or $\cos t =(\al\pm 1)/2.$ \\
If $\sin t=0$, then either $\cos t= 1$ or $\cos t= -1.$ If $\cos t=-1$, then $x(x-\al)^2=-(1+\al)^2<0.$ If $\cos t= (\al+1)/2$, then $\al\in (-1,1)$ and $\sin^2 t= 1- \left((\al+1)/2\right)^2,$ from which we can deduce using \eqref{E:int3} that \[x(x-\al)^2=\re(x(x-\al)^2)=\al-1<0.\]
Also, it can be shown using \eqref{E:int3} and \eqref{E:int2} that the remaining cases, $\cos t =1$ and $\cos t= \frac{\al-1}{2}$, imply $0< x(x-\al)^2 <4$. As a consequence, the curve $C_\al=0$ intersects $\mathbb{T}^2$ exactly at $\left(e^{it},y_{\pm}(e^{it})\right)$, where $t=0,\pm\cos^{-1}\left(\frac{\al-1}{2}\right)$. The same result also holds for $\al=1$ by continuity.
\end{proof}

\begin{lemma}\label{L:gdi}
For $\lambda\in [1,2)$, let $p_\lambda(x)=x(\lambda^2-x)\left(x^2+\left(\frac{4}{\lambda}-\lambda^2\right)x+\frac{4}{\lambda^2}\right)$ and $\gamma=\frac{\lambda^3-\lambda-2}{2\lambda}+\frac{\lambda+1}{2\lambda}\sqrt{(2-\lambda)(\lambda^3+\lambda-2)}i$. Then  we have
\begin{equation}\label{E:fml}
\int_{\lambda-1}^\gamma \frac{1}{\sqrt{-p_\lambda(x)}}\d x = \int_{0}^{-1/\lambda} \frac{1}{\sqrt{-p_\lambda(x)}}\d x, 
\end{equation}
where the left (complex) integral is path-independent in the upper-half unit disk and the right integral is a real integral.
\end{lemma}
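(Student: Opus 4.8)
The plan is to read the identity as a statement about the elliptic curve $E_\lambda\colon w^2=-p_\lambda(x)$ equipped with the holomorphic differential $\omega=\d x/\sqrt{-p_\lambda(x)}$, and first to record the geometry of its branch points. The quartic $-p_\lambda$ has the two real roots $0$ and $\lambda^2$ together with the conjugate pair $r,\bar r$ solving $x^2+(\tfrac4\lambda-\lambda^2)x+\tfrac4{\lambda^2}=0$, whose discriminant $\lambda(\lambda^3-8)$ is negative on $[1,2)$; hence $r+\bar r=\lambda^2-\tfrac4\lambda$, $r\bar r=\tfrac4{\lambda^2}$, and $|r|=2/\lambda>1$. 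Since $\lambda^2\ge 1$ and $|r|>1$ as well, none of the four branch points lies in the open upper half unit disk, which gives the asserted path-independence of the left integral; and on $(-1/\lambda,0)$ one checks $-p_\lambda>0$, so the right integral is a convergent real integral, the singularity at $0$ being of square-root type since $-p_\lambda(x)\sim-4x$ there. A computation I would carry out at the outset is that $\gamma$ lies on the unit circle: writing $\gamma=u+vi$, the polynomial identity $(\lambda^3-\lambda-2)^2+(\lambda+1)^2(2-\lambda)(\lambda^3+\lambda-2)=4\lambda^2$ gives $u^2+v^2=1$, and $v>0$ places $\gamma$ on the upper arc.

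The heart of the matter is that both sides are values of the Abel--Jacobi map, so I would recast the claim as a group relation on $E_\lambda$. Fixing along each integration path the branch of $w=\sqrt{-p_\lambda(x)}$ that defines the integrand, and writing $P_x=(x,w(x))$ for the point over $x$ on the selected sheet, the identity $\int_{\lambda-1}^{\gamma}\omega=\int_{0}^{-1/\lambda}\omega$ is equivalent to the linear equivalence of $P_\gamma$ with $P_{\lambda-1}\oplus P_{-1/\lambda}$ relative to the base point $P_0=(0,0)$. To make the group law explicit I would move this $2$-torsion branch point to infinity via $x=1/t$, $w=W/t^2$, which turns $E_\lambda$ into the Weierstrass cubic
\[ W^2=-(\lambda^2 t-1)\left(\tfrac4{\lambda^2}t^2+\left(\tfrac4\lambda-\lambda^2\right)t+1\right), \]
with $P_0\mapsto O$ and $\omega\mapsto -\d t/W$. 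Under this change $P_\gamma,P_{\lambda-1},P_{-1/\lambda}$ pass to the points over $t=\bar\gamma$ (using $1/\gamma=\bar\gamma$), $t=1/(\lambda-1)$, and $t=-\lambda$, and the target becomes the single group relation $P_\gamma=P_{\lambda-1}\oplus P_{-1/\lambda}$, i.e. the collinearity on the cubic of the points over $t=1/(\lambda-1)$, $t=-\lambda$, and $\ominus P_\gamma$.

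The computational core is then a collinearity check: substituting the line $W=mt+k$ through the points over $1/(\lambda-1)$ and $-\lambda$ into the cubic, I would show that the third intersection sits over $t=\bar\gamma$ with the correct $W$-coordinate, equivalently that the relevant $3\times 3$ determinant vanishes; I have verified this numerically, so what remains is purely algebraic. The step I expect to be the genuine obstacle is not this algebra but the sheet bookkeeping. One must check that the signs of $w$ produced by analytic continuation of $\sqrt{-p_\lambda}$ along the prescribed paths—the real segment from $0$ to $-1/\lambda$, and the arc in the upper half disk from $\lambda-1$ to $\gamma$—are exactly those for which the collinearity relation reads as an honest equality in $\C/\Lambda$ rather than merely modulo a period. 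This forces one to track the branch through the square-root singularity at $0$ and across the arc to $\gamma$, and it is precisely here that the hypotheses ``path-independent in the upper-half unit disk'' and ``$\gamma$ on the unit circle'' are used decisively.
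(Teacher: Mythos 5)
Your preliminary geometry (nonzero roots of $p_\lambda$ outside the unit circle, $|\gamma|=1$, positivity of $-p_\lambda$ on $(-1/\lambda,0)$) matches the paper's, and your main idea---reading both sides as Abel--Jacobi values and reducing to a group-law relation on $w^2=-p_\lambda(x)$---is a legitimate starting point; indeed the paper's own device, a symmetric biquadratic $F_\lambda(x,y)=0$ carrying the interval $(-1/\lambda,0)$ onto the arc from $\gamma$ to $\lambda-1$ with $\left(\frac{\d y}{\d x}\right)^2=\frac{p_\lambda(y)}{p_\lambda(x)}$, is exactly the classical algebraic incarnation (Euler's addition theorem) of the translation you describe. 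But your proposal has a genuine gap at precisely the point you yourself flag as ``the genuine obstacle'' and then do not resolve. The collinearity/divisor relation determines the difference $\int_{\lambda-1}^{\gamma}\omega-\int_{0}^{-1/\lambda}\omega$ only modulo the period lattice $\Lambda_\lambda$; your claim that the integral identity ``is equivalent to'' the linear equivalence is therefore an overstatement (and your phrase ``an honest equality in $\C/\Lambda$ rather than merely modulo a period'' conflates the two: equality in $\C/\Lambda$ \emph{is} equality modulo a period, whereas the lemma asserts equality in $\C$). No argument is offered that pins the difference down to $0$ rather than a nonzero period; saying that one must ``track the branch through the square-root singularity'' describes the missing work rather than doing it. On top of this, the collinearity itself is only verified numerically, so even the modulo-$\Lambda_\lambda$ statement is not established in the proposal.

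The gap is closable, but it needs an actual argument. One clean route: granting the group-law relation, the difference $d(\lambda)=\int_{\lambda-1}^{\gamma}\omega-\int_{0}^{-1/\lambda}\omega$ lies in $\Lambda_\lambda$ for every $\lambda\in[1,2)$; $d$ is continuous in $\lambda$, the lattices $\Lambda_\lambda$ vary continuously and their nonzero elements are bounded away from $0$ locally uniformly, and at $\lambda=1$ one has $\gamma=-1$ and $\lambda-1=0$, so the two integrals run between the same endpoints along paths homotopic in the closed upper half disk (which contains no interior branch points), giving $d(1)=0$; hence $\{\lambda: d(\lambda)=0\}$ is open and closed in $[1,2)$, so $d\equiv 0$. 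The paper sidesteps the period issue entirely by making the correspondence explicit: $F_\lambda(x,y)=0$ is a direct change of variables identifying the two integrals path-by-path, so nothing is left ambiguous modulo $\Lambda_\lambda$. Either finish your argument with such a deformation step (and replace the numerical collinearity check by an algebraic verification), or adopt the explicit-correspondence route; as written, the proposal stops short of a proof.
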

\begin{proof}
Note first that $|\gamma|=1$ and the nonzero roots of $p_\lambda(x)$ are
\[x_1(\lambda)=\lambda^2, \quad x_2(\lambda)=\frac{\lambda^3-4+\sqrt{\lambda^3(\lambda^3-8)}}{2\lambda}, \text{ and } x_3(\lambda)=\frac{\lambda^3-4-\sqrt{\lambda^3(\lambda^3-8)}}{2\lambda},\]
which lie outside the unit circle, so the integration path for the left integral can be chosen to be any path joining $\lambda-1$ and $\gamma$ in the upper-half unit disk. For $1<\lambda<2$ and $x\in \R$,
\[x^2+\left(\frac{4}{\lambda}-\lambda^2\right)x+\frac{4}{\lambda^2}=\left(x+\left(\frac{2}{\lambda}-\frac{\lambda^2}{2}\right)\right)^2-\lambda\left(\frac{\lambda^3}{4}-2\right)>0,\]
so $-p_\lambda(x)>0$ for all $x\in (-1/\lambda,0)$ and the integral on the right-hand side is real. Define the symmetric polynomial\footnote{We obtain the polynomial $F_\lambda(x,y)$ using numerical values of the integrals in \eqref{E:fml}. The \texttt{PSLQ} algorithm plays an essential role in identifying its coefficients.} $F_\lambda(x,y)$ by
\begin{multline*}
F_\lambda(x,y):=\lambda^2(\lambda-1)x^2y^2-\lambda(\lambda-1)(\lambda^3-\lambda^2+\lambda-2)(x^2y+xy^2)+\lambda^2(x^2+y^2)\\+(\lambda^7-2\lambda^6+2\lambda^5-5\lambda^4+
6\lambda^3-6\lambda^2+6\lambda-4)xy
-2\lambda^2(\lambda-1)(x+y)+\lambda^2(\lambda-1)^2.
\end{multline*}
Then, for $\lambda\in [1,2)$, $F_\lambda(x,y)$ transforms the interval $(-1/\lambda,0)$ to a continuous path in the upper-half unit disk joining $\gamma$ and $\lambda-1$. Moreover, by implicitly differentiating $F_\lambda(x,y)=0$, it can be checked using a computer algebra system that the following equation holds on this curve:
\[\left(\frac{\d y}{\d x}\right)^2-\frac{p_\lambda(y)}{p_\lambda(x)}=0,\]
from which \eqref{E:fml} follows immediately.
\end{proof}

\begin{lemma}\label{L:dl}
For $\al\in (-1,3),$ if $\al= (\lambda^3-2)/\lambda$, then 
\begin{equation*}
\frac{\d}{\d\al}\left(n(\al)-3J(\al)\right)=-\frac{1}{\pi}\int_0^{\lambda^2} \frac{1}{\sqrt{p_\lambda(x)}}\d x,
\end{equation*}
where $p_\lambda(x)$ is defined as in Lemma~\ref{L:gdi}.
\end{lemma}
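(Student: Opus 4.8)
The plan is to differentiate the expression $n(\al) - 3J(\al)$ with respect to $\al$ directly from the integral representations, reducing everything to a single elliptic period integral. First I would recall from \eqref{E:na} that
\[
n(\al) = \frac{1}{\pi}\re\int_0^\pi \log\!\left((x-\al)\left(\tfrac12+\sqrt{\tfrac14-\tfrac{1}{x(x-\al)^2}}\right)\right)\bigg\rvert_{x=e^{i\theta}}\d\theta,
\]
and similarly write out $J(\al)$ as the same integrand but with lower limit $\cos^{-1}\!\left(\frac{\al-1}{2}\right)$ rather than $0$. Thus $n(\al)-3J(\al)$ is an integral over $[0,\pi]$ minus three times an integral over the subinterval $[\cos^{-1}(\frac{\al-1}{2}),\pi]$; this is where the toric-point analysis of Proposition~\ref{P:intersect} is essential, since the endpoint $\cos^{-1}(\frac{\al-1}{2})$ is precisely the nontrivial intersection angle, so that differentiating the limit of integration produces controlled boundary contributions.

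The key step is to move the $\al$-derivative inside the integral. Differentiating the integrand $\log|y_+(e^{i\theta})|$ with respect to $\al$ (with $x=e^{i\theta}$ held fixed) gives an algebraic function of $x$ and $\al$, because $\frac{\partial}{\partial\al}\log y_+ = \frac{1}{y_+}\frac{\partial y_+}{\partial\al}$ simplifies once one uses the defining relation $Q_\al(x,y_+)=0$, i.e. $y_+^2+(x^2-\al x)y_+ + x = 0$; implicit differentiation in $\al$ yields $\frac{\partial y_+}{\partial\al} = \frac{x y_+}{2y_+ + x^2 - \al x}$, and the denominator $2y_+ + x^2-\al x = y_+ - y_-$ is exactly the square-root discriminant $\pm x(x-\al)\sqrt{\tfrac14 - \tfrac{1}{x(x-\al)^2}}$ appearing in $y_\pm$. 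After taking real parts and changing the variable from $\theta$ to $x=e^{i\theta}$ (so $\d\theta = \d x/(ix)$), the resulting contour integral should collapse to a real integral of $1/\sqrt{p_\lambda(x)}$ over a real interval, once the substitution $\al = (\lambda^3-2)/\lambda$ is made and the quartic under the root is identified with $p_\lambda$. I expect the boundary terms from differentiating the variable endpoint $\cos^{-1}(\frac{\al-1}{2})$ to cancel against the contribution of the $-3J$ term by design — this is presumably the reason the specific factor $3$ and the modified measure $\tilde n$ were introduced.

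The main obstacle will be the careful contour manipulation: after differentiation one obtains a complex line integral over an arc of the unit circle, and one must deform it, using Lemma~\ref{L:gdi} and the location of the roots $x_1,x_2,x_3$ of $p_\lambda$ outside the unit circle, into the real integral $\int_0^{\lambda^2}\d x/\sqrt{p_\lambda(x)}$ with the correct sign and branch. Concretely, I would use Lemma~\ref{L:gdi} to replace the integral along the arc from the toric point $\gamma$ (corresponding to angle $\cos^{-1}(\frac{\al-1}{2})$) to the endpoint $\lambda-1$ by the real integral from $0$ to $-1/\lambda$, and then piece together the segments $[0,\lambda-1]$, the arc, and a further real stretch so that the three subintervals assemble into the single interval $[0,\lambda^2]$, with $\lambda^2 = x_1(\lambda)$ being a root of $p_\lambda$. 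Tracking the principal branch of the square root through these deformations — ensuring $\sqrt{-p_\lambda}$ on the arc matches $\sqrt{p_\lambda}$ on the real interval with consistent orientation, and that no spurious factors of $i$ or sign errors creep in — is the delicate part, and is precisely what Lemma~\ref{L:gdi} was engineered to handle. The factor $-1/\pi$ and the overall sign then follow from bookkeeping the $\d x/(ix)$ Jacobian and the real-part extraction.
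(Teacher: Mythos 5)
Your outline follows the same route as the paper---differentiate under the integral sign, convert the resulting arc integrals into contour integrals of $1/\sqrt{p_\lambda(x)}$ inside the unit disk (legitimate because the roots of $p_\lambda$ lie outside the closed unit disk), and invoke Lemma~\ref{L:gdi}---but the final assembly, which you leave as ``piece together \dots\ a further real stretch,'' hides a genuine gap. After the contour decomposition and taking real parts, what one actually has is
\[
\frac{\d}{\d\al}\left(n(\al)-3J(\al)\right)=-\frac{1}{\pi}\left(\int_0^1\frac{\d x}{\sqrt{p_\lambda(x)}}+\re\int_0^{\gamma}\frac{\d x}{\sqrt{p_\lambda(x)}}\right).
\]
Splitting $\int_0^\gamma=\int_0^{\lambda-1}+\int_{\lambda-1}^\gamma$, Lemma~\ref{L:gdi} says only that the second piece is purely imaginary (it equals $\int_0^{-1/\lambda}\d x/\sqrt{p_\lambda(x)}$, and $p_\lambda<0$ on $(-1/\lambda,0)$), so it contributes \emph{nothing} to the real part; it is not a real ``stretch'' that fills in part of $[0,\lambda^2]$. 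What remains is $\int_0^1+\int_0^{\lambda-1}$, and since $\lambda-1\in(0,1)$ these two intervals overlap---no branch bookkeeping or orientation argument can concatenate them into $[0,\lambda^2]$. The missing ingredient is the functional equation
\[
\int_0^{\lambda-1}\frac{\d x}{\sqrt{p_\lambda(x)}}=\int_1^{\lambda^2}\frac{\d x}{\sqrt{p_\lambda(x)}},
\]
which the paper obtains from the M\"obius transformation $x\mapsto(\lambda^2-x)/(\lambda x+1)$: this is the unique M\"obius map interchanging $0\leftrightarrow\lambda^2$, $1\leftrightarrow\lambda-1$, and the two roots $x_2(\lambda)\leftrightarrow x_3(\lambda)$ of the quadratic factor, hence it permutes the roots of $p_\lambda$ and carries $\d x/\sqrt{p_\lambda(x)}$ to itself up to orientation. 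Without this (or an equivalent) identity the real contributions simply do not sum to $\int_0^{\lambda^2}\d x/\sqrt{p_\lambda(x)}$.

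Two smaller inaccuracies are worth flagging. Your expectation that the Leibniz boundary terms ``cancel against the contribution of the $-3J$ term by design'' is off: $n(\al)$ has fixed endpoints, so only $J(\al)$ produces a boundary term, and that term vanishes on its own because the moving endpoint $e^{ic(\al)}$ is a toric point (Proposition~\ref{P:intersect}), where $|y_+|=1$ and hence $\log\left|y_+\left(e^{ic(\al)}\right)\right|=0$; the factor $3$ plays no role in this, its purpose being global (it is what makes the combined path close up, as exploited in Lemma~\ref{L:path}, and what makes the pieces above add up to the complete integral from $0$ to the root $\lambda^2$). On the other hand, your implicit differentiation of $Q_\al(x,y_+)=0$ is correct and equivalent to the paper's direct differentiation of \eqref{E:na}, so that part of the plan is sound.
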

\begin{proof}
Differentiating \eqref{E:na} with respect to $\al$ yields
\begin{equation*}
\frac{\d}{\d\al}n(\al)=\frac{1}{\pi}\re\int_0^\pi \frac{\sqrt{x}}{\sqrt{x(x-\al)^2-4}}\bigg\rvert_{x=e^{i\theta}}\d\theta.
\end{equation*}
Let $c(\al)=\cos^{-1}\left(\frac{\al-1}{2}\right)$. Then, by Leibniz integral rule and Proposition~\ref{P:intersect}, we have
\begin{align*}
\frac{\d}{\d\al}J(\al)&= \frac{1}{\pi}\Bigg(-\log \left|y_+\left(e^{ic(\al)}\right)\right|\frac{\d}{\d\al}c(\al)\\
&\qquad +\re\int_{c(\al)}^\pi \frac{\d}{\d\al}\log\left((x-\al)\left(\frac12+\sqrt{\frac14-\frac{1}{x(x-\al)^2}}\right)\right)\bigg\rvert_{x=e^{i\theta}}\d\theta\Bigg)\\
&= \frac{1}{\pi}\re\int_0^{c(\al)} \frac{\sqrt{x}}{\sqrt{x(x-\al)^2-4}}\bigg\rvert_{x=e^{i\theta}}\d\theta.
\end{align*}
It follows that 
\begin{equation}\label{E:int5}
\frac{\d}{\d\al}\left(n(\al)-3 J(\al)\right)=-\frac{1}{\pi}\re\left(\left(2\int_{c(\al)}^\pi-\int_0^{c(\al)}\right)\frac{\sqrt{x}}{\sqrt{x(x-\al)^2-4}}\bigg\rvert_{x=e^{i\theta}}\d\theta\right).
\end{equation}
Let $\al=(\lambda^3-2)/\lambda$. Then $\al$ maps the interval $(1,2)$ bijectively onto $(-1,3)$ and 
\begin{equation}\label{E:poly}
x(x-\al)^2-4=(x-\lambda^2)\left(x^2+\left(\frac{4}{\lambda}-\lambda^2\right)x+\frac{4}{\lambda^2}\right).
\end{equation}
\iffalse
Therefore, we have that, for any $\theta_1,\theta_2\in [0,\pi],$
\begin{align*}
\int_{\theta_1}^{\theta_2}\frac{\sqrt{x}}{\sqrt{x(x-\al)^2-4}}\bigg\rvert_{x=e^{i\theta}}\d\theta &= \frac{1}{i}\int_{e^{i\theta_1}}^{e^{i\theta_2}}\frac{\sqrt{x}}{\sqrt{x(x-\al)^2-4}}\frac{\d x}{x}\\
&=\pm\int_{e^{i\theta_1}}^{e^{i\theta_2}}\frac{1}{\sqrt{x(\lambda^2-x)(x^2+(4/\lambda-\lambda^2)x+4/\lambda^2)}}\d x,
\end{align*}
where the sign in front of the integral depends on the integration path. Since the zeros of the polynomial in \eqref{E:poly} are outside the unit circle, we can freely decompose the paths of integration in \eqref{E:int5} into multiple paths in the unit disk. For the sake of brevity, let 
\[p_\lambda(x)=x(\lambda^2-x)\left(x^2+\left(\frac{4}{\lambda}-\lambda^2\right)x+\frac{4}{\lambda^2}\right)\text{ and } \gamma=e^{ic(\al)}. \]\fi
An inspection of the signs of the square roots in the integrand reveals that
\begin{align}
\int_{c(\al)}^\pi\frac{\sqrt{x}}{\sqrt{x(x-\al)^2-4}}\bigg\rvert_{x=e^{i\theta}}\d\theta &= -\int_{\gamma}^{-1}\frac{1}{\sqrt{p_\lambda(x)}}\d x=\left(\int_0^\gamma-\int_0^{-1}\right)\frac{1}{\sqrt{p_\lambda(x)}}\d x, \label{E:int6}\\
\int_{0}^{c(\al)} \frac{\sqrt{x}}{\sqrt{x(x-\al)^2-4}}\bigg\rvert_{x=e^{i\theta}}\d\theta &= \int_{1}^\gamma\frac{1}{\sqrt{p_\lambda(x)}}\d x =\left(\int_0^\gamma-\int_0^1\right)\frac{1}{\sqrt{p_\lambda(x)}}\d x, \label{E:int7}
\end{align}
where \begin{equation*}
\gamma=e^{i c(\al)}=\frac{\al-1}{2}+\frac{\sqrt{(3-\al)(\al+1)}}{2}i=\frac{\lambda^3-\lambda-2}{2\lambda}+\frac{\lambda+1}{2\lambda}\sqrt{(2-\lambda)(\lambda^3+\lambda-2)}i. 
\end{equation*}
Since $p_\lambda(x)<0$ for any $x\in(-1,0)$ and $\lambda\in (1,2)$, we have 
\begin{equation}\label{E:int8}
\re\int_0^{-1}\frac{1}{\sqrt{p_\lambda(x)}}\d x=0.
\end{equation}  
Plugging \eqref{E:int6},\eqref{E:int7}, and \eqref{E:int8} into \eqref{E:int5} gives 
\begin{equation}\label{E:int9}
\frac{\d}{\d\al}\left(n(\al)-3J(\al)\right)=-\frac{1}{\pi}\left(\int_0^1 \frac{1}{\sqrt{p_\lambda(x)}}\d x +\re\int_0^\gamma \frac{1}{\sqrt{p_\lambda(x)}}\d x\right).
\end{equation}
Note that the mapping 
\begin{equation}\label{E:MT}
x\mapsto \frac{\lambda^2-x}{\lambda x+1}
\end{equation}
is the unique M\"{o}bius transformation which interchanges the following values:
\[0\leftrightarrow \lambda^2, \quad 1 \leftrightarrow \lambda-1, \quad x_2(\lambda)\leftrightarrow x_3(\lambda),\]
where $x_2(\lambda)$ and $x_3(\lambda)$ are the roots of $x^2+(4/\lambda-\lambda^2)x+4/\lambda^2$. 
Hence using \eqref{E:MT} we have
\[\int_0^{\lambda-1}\frac{1}{\sqrt{p_\lambda(x)}}\d x=\int_1^{\lambda^2}\frac{1}{\sqrt{p_\lambda(x)}}\d x.\]
Finally, we have from Lemma~\ref{L:gdi} that
\[\int_{\lambda-1}^\gamma \frac{1}{\sqrt{p_\lambda(x)}}\d x = \int_{0}^{-1/\lambda} \frac{1}{\sqrt{p_\lambda(x)}}\d x  \in i\R, \]
so \eqref{E:int9} immediately gives the desired result.
\end{proof}

\begin{lemma}\label{L:compd}
For $\al\in (-1,0)$, we have 
\begin{equation}
\frac{\d}{\d\al}\left(n(\al)-3J(\al)\right) = \re\left(\frac{1}{\al}\pFq{2}{1}{\frac13,\frac23}{1}{\frac{27}{\al^3}}\right).\label{E:2F1m}
\end{equation}
For $\al\in (0,3)$, we have 
\begin{equation}
\frac{\d}{\d\al}\left(n(\al)-3J(\al)\right) = -2\re\left(\frac{1}{\al}\pFq{2}{1}{\frac13,\frac23}{1}{\frac{27}{\al^3}}\right).\label{E:2F1}
\end{equation}
\end{lemma}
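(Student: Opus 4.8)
The plan is to start from Lemma~\ref{L:dl}, which already reduces $\frac{\d}{\d\al}\bigl(n(\al)-3J(\al)\bigr)$ to the single real integral
\[
P(\lambda):=\int_0^{\lambda^2}\frac{\d x}{\sqrt{p_\lambda(x)}},\qquad \al=\frac{\lambda^3-2}{\lambda},\ \lambda\in(1,2),
\]
so the entire lemma reduces to the special-function identity $-\tfrac1\pi P(\lambda)=c(\al)\,\re\bigl(\tfrac1\al\pFq{2}{1}{\frac13,\frac23}{1}{\frac{27}{\al^3}}\bigr)$, where $c=1$ on $(-1,0)$ and $c=-2$ on $(0,3)$. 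First I would observe that $P(\lambda)$ is a genuine (real) period of the elliptic curve $y^2=p_\lambda(x)$: for $\lambda\in(1,2)$ the discriminant of the quadratic factor of $p_\lambda$ is $\lambda(\lambda^3-8)<0$, so $p_\lambda$ has exactly the two real roots $0$ and $\lambda^2$ together with a complex-conjugate pair, whence $P(\lambda)$ is finite, positive, and real-analytic in $\lambda$ — in particular across the point $\al=0$, i.e.\ $\lambda=\sqrt[3]{2}$. This curve is $3$-isogenous to the Hesse curve $P_\al=0$, whose holomorphic period is classically $\pFq{2}{1}{\frac13,\frac23}{1}{\cdot}$; this is exactly the function appearing on the right, and it is also (by a short series manipulation) what one obtains upon differentiating the large-$|\al|$ formula \eqref{E:naR}.

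I would then identify $P$ with the hypergeometric function via the Picard--Fuchs equation. Writing $H(\al)=\tfrac1\al\pFq{2}{1}{\frac13,\frac23}{1}{z}$ with $z=27/\al^3$, I transplant the Gauss operator $z(1-z)\partial_z^2+(1-2z)\partial_z-\tfrac29$ (the hypergeometric operator with parameters $\tfrac13,\tfrac23,1$) through the substitution $z=27\al^{-3}$ and the prefactor $\al^{-1}$, obtaining a second-order Fuchsian operator $\mathcal L$ in $\al$ that annihilates $H$. A direct check — differentiating $P$ twice under the integral sign, in the computer-algebra spirit of Lemma~\ref{L:gdi} — shows $\mathcal L P=0$ as well (conceptually, $\d x/\sqrt{p_\lambda}$ is the pullback of the Hesse period under the $3$-isogeny). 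On $(-1,0)$ we have $z<0$, so $H$ is real and analytic, and matching the two solutions of $\mathcal L$ reduces to comparing local data at $\al=0$: there $z=\infty$, the exponents are $\tfrac13,\tfrac23$, and since $\tfrac23-\tfrac13\notin\Z$ no logarithm occurs, so both $H$ and $P$ are analytic at $\al=0$. Matching the value and first derivative there — the value being the explicit equianharmonic ($j=0$, CM) period $P(\sqrt[3]{2})=\Gamma(\tfrac13)^3/(4\pi)$ — fixes $-\tfrac1\pi P=\re H$ on $(-1,0)$, which is \eqref{E:2F1m}.

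For $\al\in(0,3)$ the argument $z=27/\al^3$ lies on the cut $[1,\infty)$, so $H$ must be continued around $z=\infty$. Here I would use the connection formula for $\pFq{2}{1}{\frac13,\frac23}{1}{z}$ at infinity, whose dominant term carries the exponent $z^{-1/3}$; since $(-z)^{-1/3}=|z|^{-1/3}e^{\mp i\pi/3}$ on the two sides of the cut, taking real parts introduces the factor $\cos\tfrac\pi3=\tfrac12$. Because $P$ is analytic through $\al=0$ (first paragraph), continuity forces the representation to switch from $\re H$ to $-2\,\re H$, giving \eqref{E:2F1}; concretely one checks $\lim_{\al\to0^-}\re H=-\tfrac13\Gamma(\tfrac13)/\Gamma(\tfrac23)^2=\lim_{\al\to0^+}(-2\,\re H)$, both equal to the analytic value of $-\tfrac1\pi P$ at $\al=0$. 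I expect the main obstacle to be precisely this branch tracking of $\pFq{2}{1}$ across $z=\infty$ — pinning down the phase and hence the factor $-2$ while confirming single-valuedness of $\re H$ on the cut — with the verification $\mathcal L P=0$ being the other, more mechanical, technical point.
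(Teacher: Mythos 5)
Your proposal is correct in outline but takes a genuinely different route from the paper. The paper also starts from Lemma~\ref{L:dl}, but then argues by explicit reduction: an algebraic substitution (following Hall) turns the period integral $P(\lambda)=\int_0^{\lambda^2}\d x/\sqrt{p_\lambda(x)}$ into a complete elliptic integral $K$, while the right-hand side $\re\bigl(\frac1\al\pFq{2}{1}{\frac13,\frac23}{1}{\frac{27}{\al^3}}\bigr)$ is converted into the \emph{same} $K$-value by combining a real-part hypergeometric transformation of Rogers--Zudilin (in two variants, valid for $\lambda^3\in(1,2)$ and $\lambda^3\in(2,8)$ respectively --- this is precisely where the constants $1$ versus $-2$ enter) with Ramanujan's cubic transformation and the imaginary-modulus transformation of $K$; the intervals $(-1,0)$ and $(0,3)$ are handled by parallel, separate computations. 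You instead characterize both sides as solutions of one Picard--Fuchs operator, fix the solution by local data at the interior CM point $\al=0$, and transport the identity from $(-1,0)$ to $(0,3)$ by analytic continuation, with the factor $-2$ emerging from the connection formula at $z=\infty$. This is conceptually appealing --- it exhibits the two formulas as two faces of a single analytic function and explains the $-2$, which in the paper is inherited opaquely from the cited transformation --- and your numerical anchors are right (indeed $P(\sqrt[3]{2})=\Gamma(\frac13)^3/(4\pi)$, matching $\lim_{\al\to0^-}H=-\frac13\Gamma(\frac13)/\Gamma(\frac23)^2$ after division by $-\pi$). To be complete, though, three flagged steps must actually be carried out: (i) the certified check $\mathcal{L}P=0$ (in the same CAS spirit as the paper's Lemma~\ref{L:gdi}); (ii) the \emph{second} initial condition at $\al=0$ --- since the exponents of $\mathcal{L}$ there are $0$ and $1$, both analytic, the value alone does not determine the solution, so you must also evaluate $\frac{\d}{\d\al}\bigl(-\frac1\pi P\bigr)\big|_{\al=0}$ and match it against $\Gamma(-\frac13)/\bigl(9\,\Gamma(\frac13)^2\bigr)$; and (iii) in the continuation step you must track \emph{both} terms of the connection formula (exponents $z^{-1/3}$ and $z^{-2/3}$), not just the dominant one; it is fortunate, and needs saying, that both scale by the same factor $-2$ when passing from the continuation across $\al=0$ to the real part of the boundary value on the cut, which is what makes the clean statement \eqref{E:2F1} possible.
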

\begin{proof}
Let us first consider \eqref{E:2F1}. We prove this identity by expressing both sides in terms of the elliptic integral of the first kind
\[K(z)=\int_0^1\frac{\d x}{\sqrt{(1-x^2)(1-z^2x^2)}}.\]
Again, let $\al= (\lambda^3-2)/\lambda.$ 
Following a procedure in \cite[Ch.~3]{Hall}, we let 
\[u=\frac{-1-\sqrt{\lambda^3+1}}{\lambda},\quad v= \frac{-1+\sqrt{\lambda^3+1}}{\lambda}, \quad x=\frac{ut-v}{t-1}.\]
This substitution transforms the integral in Lemma~\ref{L:dl} (without the factor $-1/\pi$) into
\begin{equation*}
\frac{\lambda}{2\sqrt{\lambda^3+1}}\int_{t_1}^{t_2}\frac{\d t}{\sqrt{(B_1t^2+A_1)(B_2t^2+A_2)}},
\end{equation*}
where 
\begin{align*}
t_1&=-\frac{\lambda^3+2-2\sqrt{\lambda^3+1}}{\lambda^3}, &&t_2=-t_1, \\
A_1&=\frac{\lambda^3+2-2\sqrt{\lambda^3+1}}{4\sqrt{\lambda^3+1}},&&B_1=\frac{-\lambda^3-2-2\sqrt{\lambda^3+1}}{4\sqrt{\lambda^3+1}},\\
A_2&=\frac{-\lambda^3+2+2\sqrt{\lambda^3+1}}{4\sqrt{\lambda^3+1}},&&B_2=\frac{\lambda^3-2+2\sqrt{\lambda^3+1}}{4\sqrt{\lambda^3+1}}.
\end{align*}
Observe that, for $\lambda\in(1,2)$, we have $A_1,A_2,B_2>0$, $B_1<0$, and $\sqrt{-A_1/B_1}=t_2.$ Hence the substitution $t\mapsto \sqrt{-A_1/B_1}t$ yields
\begin{align*}
\frac{\lambda}{2\sqrt{\lambda^3+1}}\int_{t_1}^{t_2}\frac{\d t}{\sqrt{(B_1t^2+A_1)(B_2t^2+A_2)}}&=\frac{\lambda}{2\sqrt{\lambda^3+1}}\sqrt{-\frac{1}{A_2B_1}}\int_{-1}^1\frac{\d t}{\sqrt{(1-t^2)\left(1-\frac{A_1B_2}{A_2B_1}t^2\right)}}\\
&=\frac{4\lambda}{\sqrt{\left(\sqrt{\lambda^3+1}+1\right)^3\left(3-\sqrt{\lambda^3+1}\right)}}K\left(\sqrt{\frac{A_1B_2}{A_2B_1}}\right).
\end{align*}
Therefore, we obtain
\begin{equation}\label{E:1}
\frac{\d}{\d\al}\left(n(\al)-3J(\al)\right)=-\frac{4\lambda}{\pi\sqrt{\left(\sqrt{\lambda^3+1}+1\right)^3\left(3-\sqrt{\lambda^3+1}\right)}}K\left(\sqrt{\frac{A_1B_2}{A_2B_1}}\right).
\end{equation}
On the other hand, we apply the hypergeometric transformation \cite[p.~410]{RZ} 
\begin{equation}\label{E:RZ}
\re \pFq{2}{1}{\frac13,\frac23}{1}{\frac{27y}{(y-2)^3}}=\frac{y-2}{y+4}\pFq{2}{1}{\frac13,\frac23}{1}{\frac{27y^2}{(y+4)^3}},
\end{equation}
which is valid for $y\in (2,8)$, to write the right-hand side of \eqref{E:2F1} as
\begin{align*}
-\frac{2}{\al}\re\left(\pFq{2}{1}{\frac13,\frac23}{1}{\frac{27}{\al^3}}\right)&=\frac{2\lambda}{2-\lambda^3}\re\left(\pFq{2}{1}{\frac13,\frac23}{1}{\frac{27\lambda^3}{(\lambda^3-2)^3}}\right)\\
&= -\frac{2\lambda}{\lambda^3+4}\pFq{2}{1}{\frac13,\frac23}{1}{\frac{27\lambda^6}{(\lambda^3+4)^3}}.
\end{align*}
The substitution $\lambda=\sqrt[3]{4(p+p^2)}$ gives a bijection from the interval $((\sqrt{3}-1)/2,1)$ onto $(\sqrt[3]{2},2)$, which is corresponding to the interval $(0,3)$ for $\al$, with the inverse mapping $p=(\sqrt{\lambda^3+1}-1)/2$. We apply this substitution together with a classical result of Ramanujan \cite[Thm~5.6]{BerndtV} to deduce
\begin{align*}
 -\frac{2\lambda}{\lambda^3+4}\pFq{2}{1}{\frac13,\frac23}{1}{\frac{27\lambda^6}{(\lambda^3+4)^3}}&=-\frac{\sqrt[3]{4(p+p^2)}}{2(p^2+p+1)}\pFq{2}{1}{\frac13,\frac23}{1}{\frac{27p^2(1+p)^2}{4(1+p+p^2)^3}}\\
 &= -\frac{\sqrt[3]{4(p+p^2)}}{2\sqrt{1+2p}}\pFq{2}{1}{\frac12,\frac12}{1}{\frac{p^3(2+p)}{1+2p}}\\
 &= -\frac{\lambda}{2\sqrt[4]{\lambda^3+1}}\pFq{2}{1}{\frac12,\frac12}{1}{\rho(\lambda)},
\end{align*}
where \[\rho(\lambda)=\frac{\lambda^6-4\lambda^3-8+8\sqrt{\lambda^3+1}}{16\sqrt{\lambda^3+1}}.\]
Then by the identities \cite[Eq.~3.2.3]{AAR}, \cite[Eq.~15.8.1]{DLMF}
\begin{equation*}
K(k)=\frac{\pi}{2}\pFq{2}{1}{\frac12,\frac12}{1}{k^2}, \qquad
K(\sqrt{r})=\frac{1}{\sqrt{1-r}}K\left(\sqrt{\frac{r}{r-1}}\right),
\end{equation*}
we arrive at
\begin{equation}\label{E:2}
-\frac{\lambda}{2\sqrt[4]{\lambda^3+1}}\pFq{2}{1}{\frac12,\frac12}{1}{\rho(\lambda)} =-\frac{4\lambda}{\pi\sqrt{\left(\sqrt{\lambda^3+1}+1\right)^3\left(3-\sqrt{\lambda^3+1}\right)}}K\left(\sqrt{\frac{\rho(\lambda)}{\rho(\lambda)-1}}\right).
\end{equation}
It can be calculated directly that 
\begin{equation*}
\frac{\rho(\lambda)}{\rho(\lambda)-1}=\frac{\lambda^6-4\lambda^3-8+8\sqrt{\lambda^3+1}}{\lambda^6-4\lambda^3-8-8\sqrt{\lambda^3+1}}=\frac{A_1B_2}{A_2B_1},
\end{equation*}
so the right-hand side of \eqref{E:2} coincides with that of \eqref{E:1} and the proof is completed.
Equation~\eqref{E:2F1m} also follows from the arguments above, provided that \eqref{E:RZ} is replaced with 
\begin{equation*}
\re \pFq{2}{1}{\frac13,\frac23}{1}{\frac{27y}{(y-2)^3}}=\frac{4-2y}{y+4}\pFq{2}{1}{\frac13,\frac23}{1}{\frac{27y^2}{(y+4)^3}},
\end{equation*}
which is valid for $y\in (1,2).$
\end{proof}

\begin{proof}[Proof of Theorem~\ref{T:hyper}]
For $\al>3$, we can apply term-by-term differentiation to show that 
\[\frac{\d}{\d\al}\re\left(\log \al -\frac{2}{\al^3}\pFq{4}{3}{\frac43,\frac53,1,1}{2,2,2}{\frac{27}{\al^3}}\right)=\re\left(\frac{1}{\al}\pFq{2}{1}{\frac13,\frac23}{1}{\frac{27}{\al^3}}\right).\]
By analytic continuation, the above equality also holds for $\al\in (-1,0)\cup (0,3)$. Therefore, integrating both sides of \eqref{E:2F1m} and \eqref{E:2F1} yields
\[n(\al)-3J(\al)= \begin{cases}
\re\left(\log \al -\frac{2}{\al^3}\pFq{4}{3}{\frac43,\frac53,1,1}{2,2,2}{\frac{27}{\al^3}}\right)+C_1, & \text{ if } -1<\al<0,\\
-2\re\left(\log \al -\frac{2}{\al^3}\pFq{4}{3}{\frac43,\frac53,1,1}{2,2,2}{\frac{27}{\al^3}}\right)+C_2, & \text{ if } 0<\al<3,
\end{cases}
\]
for some constants $C_1$ and $C_2$. Since $\al=-1$ and $\al=3$ are on the boundary of the set $K$ defined in Section~\ref{S:intro}, an argument underneath \eqref{E:naR} implies that
\begin{equation}\label{E:n3}
\begin{aligned}
n(-1)&= \re\left(\log(-1)+2\pFq{4}{3}{\frac43,\frac53,1,1}{2,2,2}{-27}\right),\\
n(3)&= \re\left(\log 3 -\frac{2}{27}\pFq{4}{3}{\frac43,\frac53,1,1}{2,2,2}{1}\right).
\end{aligned}
\end{equation}
Hence, by continuity of $n(\al)$ and \eqref{E:n3}, we have
\begin{align*}
C_1&=\lim_{\al\rightarrow -1^+}(-3J(\al))=0, \\
C_2&=3\lim_{\al\rightarrow 3^-}(n(3)-J(\al))=0,
\end{align*}
and the desired result follows.
\end{proof}
%\section{The elliptic dilogarithm formula}\label{S:ed}

\section{Relation to elliptic regulators and $L$-values} \label{S:Lvalue}
In this section, we prove Theorem~\ref{T:main2}, which resembles Boyd's conjectures \eqref{E:Hesse}. The key idea of the proof is to rewrite $\tilde{n}(\al)$ as a regulator integral over a path joining two cusps and apply Brunault-Mellit-Zudilin formula \cite{Zudilin}, which is stated below. As usual, we define the real differential
form $\eta(f,g)$ for meromorphic functions $f$ and $g$ on a smooth curve $C$ as
\[\eta(f,g)=\log |f|\d\arg(g)-\log|g|\d\arg(f),\]
where $\d \arg(g)=\im(\d g/g)$. 
\begin{theorem}[Brunault-Mellit-Zudilin]\label{T:BMZ}
Let $N$ be a positive integer and define
\begin{equation*}
g_a(\tau)= q^{NB_2(a/N)/2}\prod_{\substack{n \geq 1 \\ n \equiv a \bmod N}}(1-q^n)\prod_{\substack{n \geq 1 \\ n \equiv -a \bmod N}}(1-q^n), \qquad q:= e^{2\pi i \tau},
\end{equation*}
where $B_2(x)= \{x\}^2-\{x\}+1/6$. Then for any $a,b,c\in \Z$ such that $N \nmid ac$ and $N\nmid bc$,
\[\int_{c/N}^{i\infty} \eta(g_a,g_b)=\frac{1}{4\pi}L(f(\tau)-f(i\infty),2),\]
where $f(\tau)=f_{a,b;c}(\tau)$ is a weight $2$ modular form given by 
\[f_{a,b;c}=e_{a,bc}e_{b,-ac}-e_{a,-bc}e_{b,ac}\]
and 
\[e_{a,b}(\tau)=\frac{1}{2}\left(\frac{1+\zeta_N^a}{1-\zeta_N^a}+\frac{1+\zeta_N^b}{1-\zeta_N^b}\right)+\sum_{m,n\ge 1}\left(\zeta_N^{am+bn}-\zeta_N^{-(am+bn)}\right)q^{mn}, \quad \zeta_N:= e^{\frac{2\pi i}{N}}.\]
\end{theorem}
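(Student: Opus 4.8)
The plan is to reduce the statement to a Mellin-transform identity and then to evaluate the resulting period integral by a Rankin--Selberg (Rogers--Zudilin) unfolding. First I would record the elementary fact that the right-hand side is a Mellin transform: writing $f(\tau)=\sum_{n\ge 0}a_n q^n$ (so that $f(i\infty)=a_0$), the Fourier expansion and $\int_0^\infty e^{-2\pi n y}y\,\d y=(2\pi n)^{-2}$ give
\[
L(f-f(i\infty),2)=\sum_{n\ge 1}\frac{a_n}{n^2}=4\pi^2\int_0^\infty\bigl(f(iy)-f(i\infty)\bigr)\,y\,\d y,
\]
so the claimed formula becomes equivalent to
\[
\int_{c/N}^{i\infty}\eta(g_a,g_b)=\pi\int_0^\infty\bigl(f(iy)-f(i\infty)\bigr)\,y\,\d y .
\]
This recasts the problem as a comparison between a regulator integral over a geodesic joining two cusps and a period integral of an explicit weight-two form along the imaginary axis.

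Next I would verify that $\eta(g_a,g_b)$ is a closed real $1$-form on $\mathbb{H}$. Since $g_a$ and $g_b$ are holomorphic and nonvanishing on $\mathbb{H}$ (they are Siegel/modular units), $\d\log g_a$ and $\d\log g_b$ are holomorphic $1$-forms, and a direct computation gives $\d\eta(g_a,g_b)=\im(\d\log g_a\wedge\d\log g_b)$, which vanishes because $\d\log g_a\wedge\d\log g_b$ is a $(2,0)$-form on the curve $\mathbb{H}$. Closedness makes $\int_{c/N}^{i\infty}\eta(g_a,g_b)$ depend only on the two endpoints (as an improper integral, once convergence at each cusp is checked from the $q$-expansions), which is what permits the path to be deformed.

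The heart of the argument is to rewrite the integrand through Eisenstein series. Using $\d\log|g_a|=\re(\d\log g_a)$ and $\d\arg g_a=\im(\d\log g_a)$, together with the fact that the functions $\log|g_a|$ and the logarithmic derivatives $\d\log g_a$ of Siegel units are expressible through the weight-one Eisenstein series $e_{a,b}(\tau)$, one checks that $\eta(g_a,g_b)$ agrees, up to an exact form, with a real $1$-form canonically attached to the weight-two Eisenstein form $f_{a,b;c}(\tau)\,\d\tau$; the antisymmetric bilinear combination $f_{a,b;c}=e_{a,bc}e_{b,-ac}-e_{a,-bc}e_{b,ac}$ is exactly the shape forced by the antisymmetry $\eta(f,g)=-\eta(g,f)$. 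The dependence on $c$ enters when the lower cusp $c/N$ is transported to the standard cusp by an element of $\mathrm{SL}_2(\mathbb{Z})$: conjugating the character data of the units by this matrix multiplies $\zeta_N^a$ and $\zeta_N^b$ by the relevant power of $\zeta_N^c$, producing precisely the twisted indices $bc$ and $-ac$. After this reduction the integral becomes a period of $f_{a,b;c}$ along the imaginary axis.

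The final and hardest step is the evaluation of $\int_0^\infty(\cdots)\,y\,\d y$ with the Eisenstein product inserted. Here I would apply the Rogers--Zudilin method: expand the product of the two weight-one series into Lambert-type sums $\sum_{m,n}(\cdots)q^{mn}$, interchange summation with the Mellin integration, and collapse the double sum over $mn$ into the Dirichlet series $\sum_{n\ge1}a_n/n^2=L(f,2)$; the constant $\tfrac{1}{4\pi}$ and the subtraction of $f(i\infty)$ then fall out of this bookkeeping, the latter accounting for the constant-term (non-cuspidal) contribution at $i\infty$. I expect the main obstacle to lie precisely in this last step: the weight-one Eisenstein series are only conditionally convergent, so the interchange of summation and integration, the Eisenstein regularization, and the careful treatment of the boundary contributions at both cusps all require justification. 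This is also what makes the hypotheses $N\nmid ac$ and $N\nmid bc$ appear, since they guarantee that the series $e_{a,bc}$ and $e_{b,-ac}$ are well defined and that the regulator integral converges at the relevant cusp. By comparison, the closedness and the Mellin/unfolding steps are routine once this analytic bookkeeping is under control.
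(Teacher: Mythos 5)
You should know at the outset that the paper contains no proof of this statement: Theorem~\ref{T:BMZ} is imported verbatim from the literature (it is the Brunault--Mellit--Zudilin formula, cited as \cite{Zudilin}) and is used as a black box in the proof of Theorem~\ref{T:main2}. So the only meaningful comparison is with the published proof, and there your outline does name the correct strategy: transporting the cusp $c/N$ to the standard cusp, expanding the Siegel units through weight-one Eisenstein series, and performing the Rogers--Zudilin unfolding is exactly how the result is proved by Zudilin and Brunault--Mellit.

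As a proof, however, your text has genuine gaps, and they sit precisely where the theorem lives. First, your opening reduction is not valid as stated: $f_{a,b;c}$ is an Eisenstein form, not a cusp form, so its coefficients are of divisor-function size and $\sum_{n\ge 1}|a_n|/n^2$ diverges, which already blocks the term-by-term interchange you invoke; worse, nothing in your argument rules out a nonzero constant term of $f$ at the cusp $0$, in which case $f(iy)-f(i\infty)$ grows like $y^{-2}$ as $y\to 0^+$ and the integral $\int_0^\infty\bigl(f(iy)-f(i\infty)\bigr)\,y\,\d y$ diverges. The value $L(f-f(i\infty),2)$ must be understood through the oscillation of the root-of-unity coefficients (this is one place the hypotheses $N\nmid ac$, $N\nmid bc$ matter), and the actual proof produces it from the unfolded double sum rather than from your naive Mellin identity. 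Second, the two central computations are asserted rather than performed: that the pullback of $\eta(g_a,g_b)$ along the transported path equals, up to controlled boundary contributions, the $1$-form attached to $f_{a,b;c}\,\d\tau$ with exactly the index pattern $(a,bc)$, $(b,-ac)$ --- the antisymmetry heuristic does not force this pattern --- and that the $m\leftrightarrow n$ interchange in the $q^{mn}$-sums collapses the double series to $L(f,2)$ with all interchanges justified. Since you yourself defer these steps as ``requiring justification,'' what you have is an accurate road map of the known proof, not a proof of the theorem.
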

Let us first outline a general framework for computing $\tilde{n}(\al)$ in terms of a regulator integral. Recall from Deninger's result \cite[Prop.~3.3]{Deninger} that if $Q_\al(x,y)$ is irreducible, then 
\begin{equation*}
n(\al)=-\frac{1}{2\pi}\int_{\overline{\gamma}_\al}\eta(x,y),
\end{equation*}
where $\gamma_\al$ is the Deninger path on the curve $E_\al: Q_\al(x,y)=0$; i.e., \[\gamma_\al=\{(x,y)\in \C^2 \mid |x|=1, |y|>1, Q_\al(x,y)=0\}.\]
If $Q_\al$ does not vanish on the torus, then $\overline{\gamma}_\al$ becomes a closed path, so  the Bloch-Beilinson conjectures give a prediction that \eqref{E:Hesse} holds for all sufficiently large $|\al|$ with suitable arithmetic properties; in this case, we need that $\al$ be a cube root of an integer. On the other hand, if $\al\in (-1,3)$, then the functions $y_{\pm}(x)$ defined in Section~\ref{S:intro} are discontinuous at the toric points as given in Proposition~\ref{P:intersect}, so $\overline{\gamma}_\al$ is not closed in this case. We will show, however, that the path on $E_\al$ corresponding to $\tilde{n}(\al)$ is indeed closed, so that $\tilde{n}(\al)$ is (conjecturally) related to $L$-values. The numerical data supporting this hypothesis are given in Table~\ref{T:2}.
\begin{lemma}\label{L:path}
Let $\alpha\in (-1,3)$ and let $\tilde{n}(\alpha)=n(\alpha)-3J(\alpha).$ Then 
\[\tilde{n}(\alpha)=-\frac{1}{2\pi}\int_{\tilde\gamma_\alpha}\eta(x,y)\]
for some $\tilde\gamma_\alpha\in H_1(E_\alpha,\Z)^-.$ In other words, the integration path associated to the \textit{modified} Mahler measure $\tilde{n}(\alpha)$ can be realized as a closed path which is anti-invariant under complex conjugation.
\end{lemma}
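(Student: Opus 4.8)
### Proof strategy for Lemma~\ref{L:path}

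The plan is to realize $\tilde n(\alpha)$ as a regulator integral of $\eta(x,y)$ over an explicit $1$-cycle on $E_\alpha$, and then verify that this cycle is closed and anti-invariant under complex conjugation. Recall from Deninger's formula that $n(\alpha)=-\frac{1}{2\pi}\int_{\overline\gamma_\alpha}\eta(x,y)$, where $\overline\gamma_\alpha$ is the image of the Deninger path $\gamma_\alpha=\{(x,y)\in C_\alpha\mid |x|=1,\ |y|\ge 1\}$. By Lemma~\ref{L:ypm} this is the branch $y=y_+(x)$ lying over the unit circle $\mathbb{T}^1$. The subtraction of $3J(\alpha)$ is what must be interpreted geometrically: since $J(\alpha)=\frac1\pi\int_{c(\alpha)}^\pi \log|y_+(e^{i\theta})|\,\d\theta$ with $c(\alpha)=\cos^{-1}\!\big(\tfrac{\alpha-1}{2}\big)$, the term $3J(\alpha)$ corresponds to (three copies of) the portion of $\overline\gamma_\alpha$ lying over the arc $\theta\in[c(\alpha),\pi]$, reexpressed as a regulator integral. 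My first step is therefore to write both $n(\alpha)$ and $J(\alpha)$ as $\eta$-integrals over subarcs of $\overline\gamma_\alpha$, using the same change of variables $x=e^{i\theta}$ and the identity $\eta(x,y_+)\big|_{|x|=1}$ that produced \eqref{E:na}.

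Next I would assemble $\tilde n(\alpha)=n(\alpha)-3J(\alpha)$ into a single integral $-\frac{1}{2\pi}\int_{\tilde\gamma_\alpha}\eta(x,y)$ by taking $\tilde\gamma_\alpha$ to be the formal combination of arcs obtained from \eqref{E:int5}: namely $\tilde\gamma_\alpha$ should be the path traced by $y_+$ over $\theta\in[0,c(\alpha)]$ together with $-2$ times the path over $\theta\in[c(\alpha),\pi]$ (matching the operator $2\int_{c(\alpha)}^\pi-\int_0^{c(\alpha)}$ appearing in the derivative computation of Lemma~\ref{L:dl}, integrated back up). The endpoints of these arcs lie precisely at the toric points located in Proposition~\ref{P:intersect}, i.e. over $x=e^{it}$ with $t=0,\pm c(\alpha)$. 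The key arithmetic input is that at each such toric point $(x,y)$ both coordinates have absolute value $1$, so $\log|x|=\log|y|=0$ there; this is exactly the condition that makes $\eta(x,y)$ have vanishing "boundary" contributions and allows the open arcs to glue into a cycle in $H_1$.

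To show $\tilde\gamma_\alpha$ is genuinely closed (a cycle, not just a chain), I would track the endpoints of the constituent arcs on $E_\alpha$ and check that the weighted sum of boundary $0$-chains cancels. Because $y_+$ and $y_-$ are exchanged as one crosses a toric point where the square root in $y_\pm$ becomes purely imaginary (see the analysis in Proposition~\ref{P:intersect}, where $x(x-\alpha)^2\in(0,4)$ forces $|y_+|=|y_-|=1$), the branch over the arc $[c(\alpha),\pi]$ can be folded back onto the arc $[0,c(\alpha)]$ on the other sheet, and the coefficient $-2$ versus $+1$ is exactly what is needed for the boundary to cancel. I would make this precise by using the hyperelliptic/elliptic involution and the complex-conjugation symmetry $y_+(e^{-i\theta})=\overline{y_+(e^{i\theta})}$ noted in \eqref{E:na}. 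This same conjugation symmetry gives anti-invariance: under $(x,y)\mapsto(\bar x,\bar y)$ the path reverses orientation while $\eta(x,y)$ is preserved up to sign, placing $[\tilde\gamma_\alpha]$ in the $(-1)$-eigenspace $H_1(E_\alpha,\Z)^-$.

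The main obstacle I anticipate is the bookkeeping at the branch points: verifying that the arcs close up requires carefully matching the two sheets $y_\pm$ across the toric points and confirming that the multiplicities $(+1,-2)$ produce a null-homologous boundary rather than merely a relative cycle. In particular one must rule out a nonzero residual $0$-chain at the point over $x=1$ (and at the conjugate points over $x=e^{\pm ic(\alpha)}$), which is where the removable-singularity/limit behavior of $y_\pm$ enters. Establishing that these endpoint contributions cancel—rather than relying on the numerical evidence in Table~\ref{T:2}—is the crux, and I expect it to follow from the explicit toric-point description in Proposition~\ref{P:intersect} combined with the monodromy of $\sqrt{\tfrac14-\tfrac{1}{x(x-\alpha)^2}}$ around those points.
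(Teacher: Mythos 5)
Your reduction of $\tilde n(\al)$ to the chain ``$(+1)\times$ the $y_+$-arc over $|\theta|<c(\al)$ plus $(-2)\times$ a single arc realizing $J(\al)$'' is exactly where the real difficulty sits, and the mechanism you propose for closing it up cannot work. First, the assertion that $\log|x|=\log|y|=0$ at the toric points ``allows the open arcs to glue into a cycle'' conflates two different things: vanishing of the logarithms at endpoints says nothing about closedness, which is the purely combinatorial condition that the boundary $0$-chain vanish. Second, and decisively, no choice of a single $1$-chain $\gamma_J$ realizing $J(\al)$ can make $\gamma_I-2\gamma_J$ closed. Writing $Y_\pm=\frac{\al-1}{2}\pm\frac{\sqrt{(3-\al)(\al+1)}}{2}\,i$, the six toric points $P_1^\pm=(1,Y_\pm)$, $P_2^\pm=(Y_\pm,1)$, $P_3^\pm=(Y_\pm,Y_\pm)$ are pairwise distinct for $\al\in(-1,3)$, and the $y_+$-arc over $-c(\al)<\theta<c(\al)$ breaks at $\theta=0$ (where $y_+$ jumps from $Y_+$ to $Y_-$), so $\partial\gamma_I=[P_1^+]-[P_2^-]+[P_2^+]-[P_1^-]$. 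This $0$-chain has all coefficients $\pm1$ on four distinct points, hence is not twice any integral $0$-chain; therefore $\partial(\gamma_I-2\gamma_J)=\partial\gamma_I-2\partial\gamma_J\neq0$ for \emph{every} integral $1$-chain $\gamma_J$. Concretely, both natural realizations fail: on the $y_+$-sheet the $J$-arc has boundary $[P_3^-]-[P_3^+]$, while after Jensen, on the $y_-$-sheet with reversed orientation, it has boundary $[P_2^+]-[P_2^-]$; in each case $\gamma_I-2\gamma_J$ is only a relative cycle in $H_1(E_\al,S,\Z)$. Neither the sheet-swap $(x,y)\mapsto(x,x/y)$ (which moves endpoints to $P_3^\pm$) nor complex conjugation (which preserves the $J$-arc as a set) can evade this parity obstruction, so the ``folding onto the other sheet via monodromy, with multiplicities $(+1,-2)$'' that you invoke cannot produce a cycle.

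The paper's proof circumvents precisely this point with an idea absent from your sketch: it splits $-2J(\al)=-J(\al)-J(\al)$ and realizes the two copies by \emph{two different paths}. One copy is $\gamma_J$, the $y_-$-branch over $c(\al)<\theta<2\pi-c(\al)$ with reversed orientation, with $\partial\gamma_J=[P_2^+]-[P_2^-]$. The other copy is $\gamma_J'$, the image of $\gamma_J$ under the involution $(x,y)\mapsto(1/y,1/x)$ of $E_\al$; explicitly $\gamma_J'$ consists of the points $\bigl(1/y_-(e^{i\theta}),\,e^{-i\theta}\bigr)$, using the identity $y_-\bigl(1/y_-(e^{i\theta})\bigr)=e^{-i\theta}$. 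One checks that $\int_{\gamma_J'}\eta(x,y)=\int_{\gamma_J}\eta(x,y)$, while the boundary moves to $\partial\gamma_J'=[P_1^+]-[P_1^-]$. Only then does everything cancel: $\partial(\gamma_I-\gamma_J-\gamma_J')=0$, so $\tilde\gamma_\al=\gamma_I-\gamma_J-\gamma_J'$ is an honest cycle, and the conjugation argument you give (which is fine as far as it goes) places its class in $H_1(E_\al,\Z)^-$. The missing idea, then, is the transport of one copy of the $J$-path by a nontrivial automorphism of the curve, which changes its endpoints from $P_2^\pm$ to $P_1^\pm$ without changing the value of the regulator integral; no amount of bookkeeping with the branch points and the two sheets over the unit circle can substitute for it.
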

\begin{proof}
We label the six toric points obtained from Proposition~\ref{P:intersect} as follows: 
\begin{align*}
P_1^{\pm}&=(1,y_{\pm}(1))=\left(1, Y_{\pm}\right),\\
P_2^{\pm}&=(e^{\pm ic(\alpha)},y_+(e^{\pm ic(\alpha)}))= \left(Y_{\pm},1\right),\\
P_3^{\pm}&=(e^{\pm ic(\alpha)},y_-(e^{\pm ic(\alpha)}))=\left(Y_{\pm}, Y_{\pm}\right),
\end{align*}
where $c(\alpha)=\cos^{-1}\left(\frac{\alpha-1}{2}\right)$ and \[Y_{\pm}=\frac{\al-1}{2}\pm\frac{\sqrt{(3-\al)(\al+1)}}{2}i.\]
Observe that $\tilde{n}(\alpha)$ can be rewritten as $\tilde{n}(\alpha)=I(\alpha)-2J(\alpha),$
where 
\begin{align*}
I(\alpha)&=\frac{1}{2\pi}\int_{-c(\alpha)}^{c(\alpha)}\log|y_+(e^{i\theta})|\d\theta,\\
J(\alpha)&=\frac{1}{2\pi}\int_{c(\alpha)}^{2\pi-c(\alpha)}\log|y_+(e^{i\theta})|\d\theta.
\end{align*}
Let $S=\{P_1^{\pm},P_2^{\pm},P_3^{\pm}\}$. Then we may identify the paths corresponding to $I(\alpha)$ and $J(\alpha)$ as elements in the relative homology $H_1(E_\alpha,S,\Z)$, say $\gamma_I$ and $\gamma_J$, respectively. In other words, we write
\[I(\alpha)=-\frac{1}{2\pi}\int_{\gamma_I}\eta(x,y),\quad J(\alpha)=-\frac{1}{2\pi}\int_{\gamma_J}\eta(x,y),\]
and boundaries of these paths can be seen as $0$-cycles on $S$. Computing the limits of $y_{+}(e^{i\theta})$ as $\theta$ approaches $0,c(\alpha)$, and $-c(\alpha)$ from both sides, we find that
\begin{align*}
\lim_{\theta\rightarrow -c(\alpha)^+}y_+(e^{i\theta})&=\lim_{\theta\rightarrow c(\alpha)^-}y_+(e^{i\theta})=1,\\
\lim_{\theta\rightarrow 0^+}y_+(e^{i\theta})&=Y_-,\\
\lim_{\theta\rightarrow 0^-}y_+(e^{i\theta})&=Y_+.
\end{align*}
Therefore, the path $\gamma_I$ is discontinuous at $\theta=0$ and 
\begin{equation}\label{E:bgI}
\partial \gamma_I=[[P_1^+]-[P_2^-]]+[[P_2^+]-[P_1^-]].
\end{equation}
This is illustrated in Figure~\ref{F:yp} for $\alpha=2$, where the dashed curves in the upper-half plane and the lower-half plane, both oriented counterclockwise, correspond to $\theta\in (-c(\alpha),0)$ and $\theta\in (0,c(\alpha))$, respectively. 
\begin{figure}[!htb]
   \begin{minipage}{0.48\textwidth}
     \centering
  \includegraphics[width=.7\linewidth]{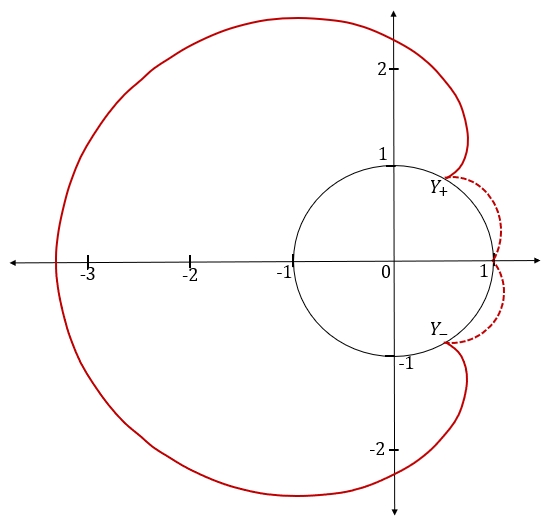}
  \caption{$y_+(e^{i\theta}), \,\theta\in [0,2\pi)$}
  \label{F:yp}
  \end{minipage}\hfill
   \begin{minipage}{0.48\textwidth}
     \centering
  \includegraphics[width=.7\linewidth]{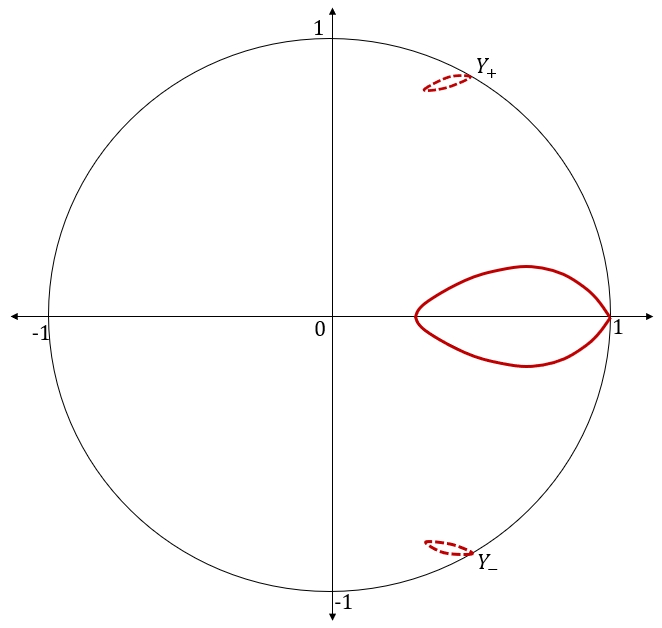}
  \caption{$y_-(e^{i\theta}), \,\theta\in [0,2\pi)$}
  \label{F:ym}
   \end{minipage}
\end{figure} 
Next, observe that 
\begin{equation*}
\lim_{\theta\rightarrow c(\alpha)^+}y_-(e^{i\theta})=1=\lim_{\theta\rightarrow -c(\alpha)^-}y_-(e^{i\theta}),
\end{equation*}
and $\gamma_J$ can be identified as the path $\{(e^{i\theta},y_-(e^{i\theta}))\mid c(\alpha)<\theta<2\pi - c(\alpha)\}$ (with reversed orientation), implying 
\begin{equation}\label{E:bgJ}
\partial \gamma_J=[[P_2^+]-[P_2^-]].
\end{equation}
(For $\alpha=2$, the $y$-coordinate of this path is the bold curve inside the unit circle, as illustrated in Figure~\ref{F:ym}, oriented clockwise.) Define 
\[\gamma_J'=\left\{\left(\frac{1}{y_-\left(e^{i\theta}\right)},y_-\left(\frac{1}{y_-\left(e^{i\theta}\right)}\right)\right) \mid c(\alpha)<\theta <2\pi-c(\alpha)\right\}.\]
By some calculation, one sees that
\begin{align*}
y_-\left(\frac{1}{y_-\left(e^{i\theta}\right)}\right)&=e^{-i\theta}, \\
\lim_{\theta\rightarrow c(\alpha)^+}y_-(e^{i\theta})&=1=\lim_{\theta\rightarrow -c(\alpha)^-}y_-(e^{i\theta}),
\end{align*}
implying 
\begin{equation}\label{E:bgJp}
\partial \gamma_J' = [[P_1^+]-[P_1^-]].
\end{equation}
Moreover, we have 
\begin{align*}
\int_{\gamma_J}\eta(x,y)&=-\int_{c(\alpha)}^{2\pi-c(\alpha)}\log|y_+\left(e^{i\theta}\right)|\d\theta\\
&=\int_{c(\alpha)}^{2\pi-c(\alpha)}\log|y_-\left(e^{i\theta}\right)|\d\theta\\
&=\int_{c(\alpha)}^{2\pi-c(\alpha)}\log(|1/y_-\left(e^{i\theta}\right)|)\d(-\theta)\\
&=\int_{\gamma_J'}\eta(x,y).
\end{align*}
Finally, we arrive at 
\[\tilde{n}(\alpha)=I(\alpha)-2J(\alpha)=-\frac{1}{2\pi}\left(\int_{\gamma_I}\eta(x,y)-\int_{\gamma_J}\eta(x,y)-\int_{\gamma_J'}\eta(x,y)\right)=-\frac{1}{2\pi}\int_{\tilde{\gamma}_\alpha}\eta(x,y),\]
where, by \eqref{E:bgI},\eqref{E:bgJ}, and \eqref{E:bgJp}, $\tilde{\gamma}_\alpha$ has trivial boundary, from which we can conclude that $\tilde{\gamma}_\alpha\in H_1(E_\alpha,\Z).$ It is clear from the construction of the paths $\gamma_I,\gamma_J,$ and $\gamma_J'$ that they are anti-invariant under the action of complex conjugation. Therefore, we have $\tilde{\gamma}_\alpha\in H_1(E_\alpha,\Z)^-$, as desired.
\end{proof}

We shall use Theorem~\ref{T:BMZ} and Lemma~\ref{L:path} to prove Theorem~\ref{T:main2}. We essentially follow an approach of Brunault \cite{Brunault} in identifying the path $\tilde{\gamma}_2$ as the push-forward of a path joining cusps on $X_1(19)$ with the aid of \texttt{Magma} and \texttt{Pari/GP}.

\begin{proof}[Proof of Theorem~\ref{T:main2}]
The elliptic curve $E_2:y^2+(x^2-2x)y+x=0$ has Cremona label $19a3$, so it admits a modular parametrization $\varphi:X_1(19)\rightarrow E_2$. Let $f_{2}$ be the weight $2$ newform of level $19$ associated to the curve $E_2$ and let $\omega=2\pi i f_2(\tau)\d\tau,$ the pull-back of the holomorphic differential form on $E_2$. Using Magma and Pari/GP codes in \cite[\S 6.1]{Brunault}, we find that \[\int_{4/19}^{-4/19}\omega = -\Omega^-\approx - 4.12709i,\] where $\Omega^-$ is the imaginary period of $E_2$ obtained by subtracting twice the complex period from the real period of $E_2$. Hence it follows that $\tilde{\gamma}_2= \varphi_*\left\{\frac{4}{19},-\frac{4}{19}\right\}$, where $\tilde{\gamma}_\alpha$ is the path associated to $\tilde{n}(\alpha)$. We have from \cite[Tab.~1]{Brunault2} that the curve $E_2$ can be parametrized by modular units, which are given explicitly as follows.
Let
\begin{align*}
x(\tau)&=-\frac{g_1g_7g_8}{g_2g_3g_5},\\
y(\tau)&=\frac{g_1g_7g_8}{g_4g_6g_9},
\end{align*}
where $g_a:=g_a(\tau)$ is as given in Theorem~\ref{T:BMZ} with $N=19$. By a result of Yang \cite[Cor.~3]{Yang}, both $x(\tau)$ and $y(\tau)$ are modular functions on $\Gamma_1(19)$. Multiplying each term by a modular form in $M_2(\Gamma_1(19))$, one can apply Sturm's theorem \cite[Cor.~9.19]{Stein}, with the Sturm bound $B(M_2(\Gamma_1(19)))=60$, to show that $y(\tau)^2+(x(\tau)^2-2x(\tau))y(\tau)+x(\tau)$ vanishes identically; i.e., $(x(\tau),y(\tau))$ parametrizes the curve $E_2$.
Finally, by Lemma~\ref{L:path} and Theorem~\ref{T:BMZ}, we find that 
\begin{equation*}
\tilde{n}(2)=-\frac{1}{2\pi}\int_{\tilde{\gamma}_2}\eta(x,y)=\frac{1}{2\pi}\int_{-4/19}^{4/19}\eta(x(\tau),y(\tau))=-\frac{1}{4\pi^2}L(57f_{2},2)=-3L'(f_2,0),
\end{equation*}
where the last equality follows from the functional equation for $L(f_2,s)$.
\end{proof}

In addition to \eqref{E:19}, we discovered that, for all $\al\in (-1,3)$ which are cube roots of integers, the following identity holds numerically:
\begin{equation}\label{E:conj}
\tilde{n}(\al)\stackrel{?}=r_\al L'(E_\al,0),
\end{equation}
where $r_\al\in \Q$. The data of $r_\al$ and $E_\al$ are given in Table~\ref{T:2}.

\begin{table}[ht]\label{Ta:ntilde}
\centering \def\arraystretch{1.1}
    \begin{tabular}{ | c | c | c | c | c | c |}
    \hline
    $\al^3$ & Cremona label of $E_\al$ & $r_\al$ & $\al^3$ & Cremona label of $E_\al$ & $r_\al$ \\ \hline
    $1$ & $26a3$ & $-1$  & $14$ & $2548d1$ & $1/36$ \\ \hline
    $2$ & $20a1$ & $-5/3$ & $15$ & $1350i1$ & $1/18$ \\ \hline
    $3$ & $54a1$ & $-2/3$ & $16$ & $44a1$ & $-4/3$ \\ \hline
    $4$ & $92a1$ & $-1/3$  & $17$ & $2890e1$ & $-1/27$\\ \hline
    $5$ & $550d1$ & $-1/9$  & $18$ & $324b1$ & $-1/6$\\ \hline
    $6$ & $756f1$ & $-1/18$  & $19$ & $722a1$ & $1/9$\\ \hline
    $7$ & $490a1$ & $1/9$ & $20$ & $700i1$ & $-1/9$ \\ \hline
    $8$ & $19a3$ & $-3$  & $21$ & $2464k1$ & $-1/27$\\ \hline
    $9$ & $162c1$ & $-1/3$  & $22$ & $2420d1$ & $1/26$\\ \hline
    $10$ & $1700c1$ & $1/36$  & $23$ & $1058b1$ & $-1/12$\\ \hline
    $11$ & $242b1$ & $-1/3$ & $24$ & $27a1$ & $-3$ \\ \hline
    $12$ & $540d1$ & $1/9$ & $25$ & $50a1$ & $-5/3$ \\ \hline
    $13$ & $2366d1$ & $-1/45$ & $26$ & $676c1$ & $-1/6$ \\ \hline
    \end{tabular}
\caption{Data for \eqref{E:conj}}\label{T:2}
\end{table} 
It might be possible to prove some formulas in this list by relating $\tilde{n}(\al)$ to known results in Table~\ref{Ta:Pk}. In particular, the conjectural formulas for the curves of conductor $20, 27,$ and $54$ are equivalent to the following identities:
\begin{align*}
\tilde{n}(\sqrt[3]{2})&\stackrel{?}=-\frac58 n(\sqrt[3]{32}), \\
\tilde{n}(\sqrt[3]{24})&\stackrel{?}=- n(-6),\\
\tilde{n}(\sqrt[3]{3})&\stackrel{?}=-\frac32 n(-3).
\end{align*}
As a side note, the authors of \cite{RZ} (incorrectly) proved 
\begin{equation}\label{E:RZ2}
n(\sqrt[3]{2})=\frac{5}{6}L'(E_{\sqrt[3]{2}},0)
\end{equation}
(see the corollary under \cite[Thm.~5]{RZ}).
In their arguments, they made use of the following functional identity for Mahler measures \cite[Thm.~2.4]{LR}: for sufficiently small $|p|\ne 0$,
\begin{equation}\label{E:FE}
3g\left(\frac{1}{p}\right)=n\left(\frac{1+4p}{\sqrt[3]{p}}\right)+4n\left(\frac{1-2p}{\sqrt[3]{p^2}}\right),
\end{equation}
where $g(\alpha)=\m((x+1)(y+1)(x+y)-\alpha xy)$. When any of the arguments of $n$ in \eqref{E:FE} enters the region inside the hypocycloid in Figure~\ref{F:hypocycloid} (e.g. $p=-1/2$ in this case), this functional identity could be invalid due to discontinuity. Therefore, it is logically forbidden to deduce \eqref{E:RZ2} from \eqref{E:FE}. In fact, by extending the hypergeometric formula \eqref{E:naR} to the real line, Rogers \cite{RogersP} conjectured that 
\begin{equation}\label{E:R2}
n(2)=\frac{3}{2}L'(E_2,0),
\end{equation}  
which is not the case by Theorem~\ref{T:main2}. It should be noted that both \eqref{E:RZ2} and \eqref{E:R2} make perfect sense if one thinks of $n(\alpha)$ as the right-hand side of $\eqref{E:naR}$ on the punctured real line. That said, this strange behavior of the function $n(\al)$ became a part of our motivation to initiate this project.

\section{Final remarks} \label{S:Remarks}
The family $Q_\alpha$ is among the several nonreciprocal families of two-variable polynomials studied by Boyd. Our results provide evidence of how Mahler measure behaves when the zero locus of a bivariate polynomial intersects the $2$-torus nontrivially. This could shed some light on the discrepancies between Mahler measure and (elliptic) regulator, which is conjecturally related to $L$-values under favorable conditions. Another family which possesses similar properties (i.e. nonreciprocality and temperedness) to $Q_\alpha$ is 
\[S_\alpha = y^2+(x^2+\alpha x +1)y+x^3,\]
which is labeled $(2\text{-}33)$ in \cite{Boyd}. Let $K$ be as defined in Section~\ref{S:intro}. Then for the family $S_\alpha$ we have $K\cap \R=[-4,2].$ For $\alpha$ in this range, the Mahler measure of $S_\alpha$ again splits naturally at the points of intersection between the curve $S_\alpha=0$ and the $2$-torus. If $k=0$, these points are $\pm i$, and Boyd verified numerically that 
\begin{equation}\label{E:B11}
\frac{1}{\pi}\int_{0}^{\pi/2}\log |y_-(e^{i\theta})|\d \theta-\frac{1}{\pi}\int_{\pi/2}^{\pi}\log |y_-(e^{i\theta})|\d \theta\stackrel{?}=-L'(E,0),
\end{equation}
where $y_-(x)=-\frac{(x^2+1)}{2}\left(1-\sqrt{1-\frac{4x^3}{(x^2+1)^2}}\right)$ and $E$ is the conductor $11$ elliptic curve defined by $S_0=0$. He also remarked
\begin{center}
\textit{
``This is in accord with our contention that in case $P$ vanishes on the torus, it is the integral of $\omega$ around a branch cut rather than $\m(P),$ which should be rationally
related to $L'(E,0)$.''.}
\end{center}
One might try to prove this identity using the investigation carried out in Section~\ref{S:Lvalue} and a result of Brunault \cite{Brunault11} concerning Mahler measure of a conductor $11$ elliptic curve. We also discovered conjectural identities analogous to \eqref{E:B11} for elliptic curves of conductor $17$ and $53$, which are corresponding to $k=1$ and $k=-1$, respectively. As opposed to the family $Q_\alpha$, we are unable to find a general formula, both analytically and arithmetically, for Mahler measure (or its modification) of $S_\alpha$, so the situation seems less apparent for this family. 

We would also like to point out another related result in the literature which we find incomplete. In \cite[Thm~3.1]{GR}, Guillera and Rogers assert that for $q=e^{2\pi i\tau}\in (-1,1)$ if $\alpha=3\left(1+27\frac{\eta^{12}(3\tau)}{\eta^{12}(\tau)}\right)^{\frac13},$ then
\begin{equation}\label{E:ED}
{n(\alpha)= \frac{9}{2\pi}\sum_{n=-\infty}^\infty D\left(e^{2\pi i/3}q^n\right)},
\end{equation}
where $\eta(\tau)$ is the Dedekind eta function, and $D(z)$ is the Bloch-Wigner dilogarithm. The summation in the formula above can be seen as a value of the \textit{elliptic dilogarithm}. Consider the curve $E_2$, which appears in Theorem~\ref{T:main2} and is isomorphic to $\C/\Z+\Z\tau$, where $\tau=1/2+0.50586\ldots i.$ Then we have $q=e^{2\pi i \tau}=-0.04165\ldots$. However, the identity \eqref{E:ED} seems invalid in this case (and all other cases for $-1< \alpha< 3)$. The right-hand side is numerically equal to $\frac{3}{2}L'(E_2,0)$, which is a conjecture of Bloch and Grayson \cite{BG}, while $n(2)$ is not a rational multiple of $L'(E_2,0)$. A correct formula for $\alpha\in (-1,3)$ should be
\begin{equation*}
\tilde{n}(\alpha)=
-\frac{9}{\pi}\displaystyle\sum_{n=-\infty}^\infty D\left(e^{2\pi i/3}q^n\right),
\end{equation*}
which can be proven using Lemma~\ref{L:path} and \cite[Prop.~19]{Brunault}.

Finally, we propose some problems for the interested readers. 
\begin{itemize}
\item[(i)] The function $\tilde{n}(\alpha)$ looks somewhat unnatural at first glance. Is it possible to write it as the (full) Mahler measure of some polynomial?
\item[(ii)] Do there exist algebraic integers $\beta$ for which $\sqrt[3]{\beta}\in (-1,3)$ and $\tilde{n}(\sqrt[3]{\beta})$ is a linear combination of $L'(E,0)$ (i.e. identities analogous to \eqref{E:Sa})? As suggested by a result of Guillera and Rogers above, one might start by evaluating the function $u(\tau)=3\left(1+27\frac{\eta^{12}(3\tau)}{\eta^{12}(\tau)}\right)^\frac13$ at some suitable CM points and numerically compare $\tilde{n}(u(\tau))$ with related elliptic $L$-values using the \texttt{PSLQ} algorithm.
\end{itemize}
\section*{Funding}
This work was supported by the National Research Council of Thailand (NRCT) under the Research Grant for Mid-Career Scholar [N41A640153 to D.S.].

\section*{Acknowledgements}
The author is indebted to Wadim Zudilin for helpful discussions and his suggestion about integral and hypergeometric identities in the proofs of Lemma~\ref{L:dl} and Lemma~\ref{L:compd}. The author would also like to thank Fran\c{c}ois Brunault for his guidance on an approach to proving Lemma~\ref{L:path} and his explanation about Deninger's results. This work would not have been complete without insightful comments from Mat Rogers and Fran\c{c}ois Brunault on early versions of this manuscript, so the author would like to acknowledge them here. Finally, the author thanks Yusuke Nemoto and Zhengyu Tao for bringing a sign error in Theorem~\ref{T:hyper}  and a miscalculation in the proof of Lemma~\ref{L:path} in the previous version of this paper to his attention.
\bibliographystyle{amsplain}
\bibliography{Mahler}
\end{document}